\newtheorem{theorem}{Theorem}[section]
\newtheorem{corollary}[theorem]{Corollary}
\newtheorem{lemma}[theorem]{Lemma}
\theoremstyle{definition}
\newtheorem{definition}[theorem]{Definition}
\newcommand{\cA}{\mathcal{A}}
\newcommand{\cB}{\mathcal{B}}
\newcommand{\cC}{\mathcal{C}}
\newcommand{\cD}{\mathcal{D}}
\newcommand{\cH}{\mathcal{H}}
\newcommand{\cI}{\mathcal{I}}
\newcommand{\cP}{\mathcal{P}}
\newcommand{\cQ}{\mathcal{Q}}
\newcommand{\cR}{\mathcal{R}}
\newcommand{\cS}{\mathcal{S}}
\newcommand{\cU}{\mathcal{U}}
\newcommand{\cV}{\mathcal{V}}
\newcommand{\cX}{\mathcal{X}}
\newcommand{\cY}{\mathcal{Y}}
\newcommand{\cZ}{\mathcal{Z}}
\newcommand{\vect}[1]{\mathrm{vec}(#1)}
\DeclareMathOperator*{\fold}{{fold}}
\DeclareMathOperator*{\unfold}{{unfold}}
\DeclareMathOperator*{\bcirc}{{bcirc}}
\DeclareMathOperator*{\bdiag}{bdiag}
\DeclareMathOperator{\rank}{rank}
\DeclareMathOperator*{\argmin}{argmin}
\newcommand{\st}{\mbox{subject to}}
\providecommand{\norm}[1]{\left\lVert#1\right\rVert}
\newcommand{\ve}{\mathbf{e}}
\newcommand{\vx}{\mathbf{x}}
\newcommand{\vy}{\mathbf{y}}
\newcommand{\vz}{\mathbf{z}}
\newcommand{\vo}{\mathbf{0}}
\title[Iterative Singular Tube Hard Thresholding Algorithms]{Iterative Singular Tube Hard Thresholding Algorithms for Tensor Recovery}
\author[R. Grotheer, S. Li, A. Ma, D. Needell and J. Qin]{}
\subjclass{Primary: 15A69; Secondary: 68W20, 15A83, 94A12, 65F55.}
\keywords{Tensor completion, tubal rank, hard thresholding, stochastic algorithm, image inpainting.}
\thanks{$^*$Corresponding author: Jing Qin}
\begin{document}
\maketitle

\centerline{\scshape
Rachel Grotheer$^{{\href{mailto:grotheerre@wofford.edu}{\textrm{\Letter}}}1}$,
Shuang Li$^{{\href{mailto:lishuang@iastate.edu}{\textrm{\Letter}}}2}$,
Anna Ma$^{{\href{mailto:anna.ma@uci.edu}{\textrm{\Letter}}}3}$,
Deanna Needell$^{{\href{mailto:deanna@math.ucla.edu}{\textrm{\Letter}}}4}$
and Jing Qin$^{{\href{mailto:jing.qin@uky.edu}{\textrm{\Letter}}}*5}$}

\medskip

{\footnotesize
 \centerline{$^1$Department of Mathematics, Wofford College, Spartanburg, SC 29303, USA}
} 

\medskip

{\footnotesize
 \centerline{$^2$Department of Electrical and Computer Engineering, Iowa State University, Ames, IA 50011, USA
}
}

\medskip

{\footnotesize
 \centerline{$^3$Department of Mathematics, University of California, Irvine, CA 92697, USA
}
}

\medskip

{\footnotesize
 \centerline{$^4$Department of Mathematics, University of California, Los Angeles, CA 90095, USA
}
}

\medskip

{\footnotesize
 \centerline{$^5$Department of Mathematics, University of Kentucky, Lexington, KY 40506, USA
}
}

\bigskip

 \centerline{(Communicated by Handling Editor)}


\begin{abstract}
Due to the explosive growth of large-scale data sets, tensors have been a vital tool to analyze and process high-dimensional data. Different from the matrix case, tensor decomposition has been defined in various formats, which can be further used to define the best low-rank approximation of a tensor to significantly reduce the dimensionality for signal compression and recovery. In this paper, we consider the low-rank tensor recovery problem when the tubal rank of the underlying tensor is given or estimated \emph{a priori}. We propose a novel class of iterative singular tube hard thresholding algorithms for tensor recovery based on the low-tubal-rank tensor approximation, including basic, accelerated deterministic and stochastic versions. Convergence guarantees are provided along with the special case when the measurements are linear. Numerical experiments on tensor compressive sensing and color image inpainting are conducted to demonstrate convergence and computational efficiency in practice.
\end{abstract}


\section{Introduction}
Due to the fast development of sensing and transmission technologies, data has been growing explosively, which makes the data representation a bottleneck in signal analysis and processing. Unlike the traditional vector/matrix representation, tensor provides a versatile tool to represent large-scale multi-way data sets. It has also been playing an important role in a wide spectrum of application areas, including video processing, signal processing, machine learning and neuroscience. However, when transitioning from two-way matrices to multi-way tensors, it becomes challenging to extend certain operators and preserve properties, e.g., tensor multiplication. Moreover, it is desirable to design fast numerical algorithms since computation involving tensors is highly demanding. To address these issues, decomposition of tensors becomes a feasible and popular approach to extract latent data structures and break the curse of dimensionality.

Many tensor decomposition methods have been developed by generalizing matrix decomposition. In particular, similar to the principal component decomposition for matrices, the CANDECOMP/PARAFAC (CP) decomposition \cite{cp1,cp2} decomposes a tensor as a sum of rank-one tensors. As a generalization of the matrix singular value decomposition (SVD), the Tucker form \cite{tucker} decomposes a tensor as a $k$-mode product of a core tensor and multiple matrices. Two popular Tucker decomposition methods have been proposed, including Higher Order Singular Value Decomposition (HOSVD) \cite{de2000multilinear} and Higher Order Orthogonal Iteration (HOOI) \cite{de2000best}. For the introduction of various tensor decomposition methods, we refer the readers to the comprehensive review paper \cite{kolda2009tensor} and references therein.

Finding the CP decomposition is an NP-hard problem and the low-CP-rank approximation is ill-posed \cite{hillar2013most}. Moreover, the Tucker decomposition is not unique as it depends on the representation order for each mode, which is also computationally expensive \cite{kolda2009tensor}. To make tensor decomposition more practically useful and possess uniqueness guarantees, a new type of tensor product, called t-product, and its corresponding related concepts including tubal rank, t-SVD, and tensor nuclear norm were developed for third-order tensors based on the fast Fourier transform \cite{kilmer2011factorization,kilmer2013third}. It has been applied to solve many high-dimensional data recovery problems, including tensor robust principal component analysis \cite{lu2016tensor}, tensor denoising and completion \cite{zhang2014novel,liu2016low,nimishakavi2018dual,wang2018noisy,wang2019noisy,zhang2019corrected}, imaging applications \cite{kilmer2011factorization,kilmer2013third}, image deblurring and video face recognition via order-$p$ tensor decomposition \cite{martin2013order}, and hyperspectral image restoration \cite{fan2017hyperspectral}. In this paper, we focus on third-order tensors for simplicity and adopt the t-product. All our results can be extended to higher order tensors using a well-defined tensor product following the analysis in \cite{martin2013order}.

Due to the compressibility of high-dimensional data, a large amount of data recovery problems can be considered as a tubal-rank restricted minimization problem whose objective function depends on the generation of measurements. For example, in tensor compressive sensing, measurements are generated by taking a single or a sequence of tensor inner products of the sensing tensors and the tensor to be restored. Accordingly, the objective function is usually in the form of the $\ell_2$-norm. The tensor restricted isometry property (RIP) and exact tensor recovery for the linear measurements is discussed in \cite{zhang2019tensor} as an extension of the matrix RIP.
Rank-restricted RIP in the matrix case \cite{foucart2019iterative} can be extended to the tubal-rank-restricted RIP \cite{zhang2019rip} in the tensor case, which will pave the theoretical foundation for our work. Motivated by the iterative singular tube thresholding (ISTT) algorithm \cite{wang2018noisy}, we propose iterative singular tube hard thresholding (ISTHT) algorithm which alternate gradient descent and singular tube hard thresholding (STHT). Note that ISTT uses the soft thresholding operator as the proximal operator of the tensor tubal nuclear norm, i.e., the solution to a convex relaxed tensor tubal nuclear norm minimization problem. Unlike ISTT, STHT serves as the proximal operator of the cardinality of the set consisting of nonzero tubes resulting from the tubal-rank constraint. In addition, STHT is based on the reduced t-SVD which provides a low tubal rank approximation of a given tensor \cite{kilmer2011factorization}. Iterative hard thresholding algorithms for several tensor decompositions, based on the low Tucker rank approximation of a tensor, are discussed in the compressive sensing scenario with linear measurements \cite{rauhut2017low}. Low-tubal-rank tensor recovery can also be solved by proximal gradient algorithm \cite{liu2023proximal}. To further speed up the computation, we also develop an accelerated version of ISTHT, which integrates the Nesterov scheme for updating the step size in an adaptive manner.

When the data size is extremely large, stochastic gradient descent can be embedded into the algorithm framework to reduce the computational cost. Recently, stochastic greedy algorithms (SGA) including stochastic iterative hard thresholding (StoIHT) and stochastic gradient matching pursuit (StoGradMP) have shown great potential in efficiently solving sparsity constrained optimization problems \cite{nguyen2017linear,qin2021stochastic} as well as in various applications \cite{durgin2019fast}. By iteratively seeking the support and running stochastic gradient descent, SGA can preserve the desired sparsity of iterates while decreasing the objective function value. Based on this observation, we develop stochastic ISTHT algorithms in non-batched and batched versions. Theoretical discussions have shown the proposed stochastic algorithm achieves at a linear convergence rate. Note that we develop iterative hard thresholding algorithms based on the low tubal rank approximation in a more general setting where the objective function is separable and satisfies the tubal-rank restricted strong smoothness and convexity properties. Numerical experiments on synthetic and real third-order tensorial data sets, including synthetic linear tensor measurements and RGB color images, have demonstrated that the proposed algorithms are effective in high-dimensional data recovery. It is worth noting that proposed tensor algorithms perform excellent especially for color image inpainting in terms of computational efficiency and recovery accuracy when the underlying image has a low tubal rank structure.

The rest of the paper is organized as follows. In Section~\ref{sec:pre}, fundamental concepts in tensor algebra are introduced, together with properties of tensor-variable functions, such as tensor restricted strong convexity and smoothness, and the tubal-rank restricted isometry property. In Section~\ref{sec:alg}, we describe the proposed non-stochastic/stochastic ISTHT algorithms in detail. Section~\ref{sec:cnvg} provides convergence analysis of our proposed algorithms for tensor completion with linear measurements as a special case.  Numerical experiments and corresponding performance results are reported in Section~\ref{sec:exp}. Finally, we draw a conclusion and point out future works in Section~\ref{sec:con}.

\section{Preliminaries}\label{sec:pre}
In this section, we provide preliminary knowledge for the best low-tubal-rank approximation and then define new concepts based on the tubal rank, including tubal-rank restricted {strong} convexity and smoothness of functions on a tensor space, and the tubal-rank restricted isometry property.

\subsection{Tensor Algebra}
To comply with traditional notation, we use boldface lower case letters to denote vectors by default, e.g., a vector $\vx\in\mathbb{R}^n$. Matrices are denoted by capital letters, e.g., $X\in\mathbb{R}^{n_1\times n_2}$  represents a matrix with $n_1$ rows and $n_2$ columns. Calligraphy letters such as $\mathcal{X}$ are used to denote tensors or unless specified otherwise (e.g., linear map), and let $\mathbb{R}^{n_1\times n_2\times n_3}$ be the space consisting of all real third-order tensors of size $n_1\times n_2\times n_3$. The set of integers $\{1,2,\ldots,n\}$ is denoted by $[n]$. Given a tensor, a fiber is a vector obtained by fixing two dimensions while a slice is a matrix obtained by fixing one dimension. If $\mathcal{X}\in\mathbb{R}^{n_1\times n_2\times n_3}$, then the fiber $\mathcal{X}(i,j,:)$ along the third dimension is a $n_3$-dimensional vector, which is also called the $(i,j)$-th tube or tubal-scalar of length $n_3$. Likewise, $\mathcal{X}(i,:,:)$, $\mathcal{X}(:,i,:)$ and $\mathcal{X}(:,:,i)$ are used to denote the respective horizontal, lateral and frontal slices. The $(i,j,k)$-th component of $\cX$ is denoted by $\cX_{i,j,k}$ or $\cX(i,j,k)$. A tensor $\cX$ is called f-diagonal if all frontal slices are diagonal matrices. For further notational convenience, the frontal slice $\mathcal{X}(:,:,i)$ is denoted by $X^{(i)}$. Using frontal slices, block diagonalization and circular operators which convert an $n_1\times n_2\times n_3$ tensor to a $(n_1n_3)\times(n_2n_3)$ matrix are defined as follows
\[
\bdiag(\cX)=\begin{bmatrix}
X^{(1)}&&&\\
&X^{(2)}&&\\
&&\ddots&\\
&&&X^{(n_3)}
\end{bmatrix}\hspace{-3pt},\,
\bcirc(\cX)=\begin{bmatrix}
X^{(1)}&X^{(n_3)}&\cdots&X^{(2)}\\
X^{(2)}&X^{(1)}&\cdots&X^{(3)}\\
&\cdots&\cdots&\\
X^{(n_3)}&X^{(n_3-1)}&\cdots&X^{(1)}
\end{bmatrix}\hspace{-4pt}.
\]
Without padding extra entries, another pair of operators are also defined to rewrite an $n_1\times n_2\times n_3$ tensor as an $(n_1n_3)\times n_2$ matrix and vice versa:
\[
\unfold(\cX)=\begin{bmatrix}X^{(1)}\\ \vdots\\ X^{(n_3)}\end{bmatrix},\quad
\fold(\unfold(\cX))=\cX.
\]

\begin{definition}\label{def:tprod}\cite{kilmer2013third}
Given two tensors $\mathcal{A}\in\mathbb{R}^{n_1\times n_2\times n_3}$ and $\mathcal{B}\in\mathbb{R}^{n_2\times n_4\times n_3}$, the tensor product (t-product) is defined as
\begin{equation}\label{eqn:tprod}
\mathcal{A}*\mathcal{B}=\mathrm{fold}(\mathrm{bcirc}(\mathcal{A})\cdot
\mathrm{unfold}(\mathcal{B})),
\end{equation}
where $\cdot$ is the standard matrix multiplication. We can also rewrite \eqref{eqn:tprod} as
\[
\mathrm{unfold}(\cA*\cB)=\mathrm{bcirc}(\cA)\mathrm{unfold}(\cB).
\]
By letting $\cC=\cA*\cB$, we get an equivalent form of the above equation \cite{chen2021regularized}
\[
\mathcal{C}(i,j,:)=\sum_{k=1}^{n_2}\mathcal{A}(i,k,:)\odot\mathcal{B}(k,j,:),\quad i=1,\ldots,n_1,\, j=1,\ldots,n_4,
\]
where $\odot$ is the circular convolution of two vectors by treating $1\times 1\times n_3$ tensors as vectors. Using the convolution-based formulation, we can get an intuitive connection between the $t$-product and other imaging applications, such as image deblurring \cite{chen2021regularized}.
\end{definition}
\noindent\textbf{Remark.} Using the definition of t-product and the block circulant operator, we can show that
\[
\bcirc(\cA*\cB)=\bcirc(\cA)\bcirc(\cB).
\]

Based on the definition of t-product, a lot of concepts in matrix algebra can be extended to the tensor case. For example, the transpose of $\cX$ is denoted by $\cX^T$, which is an $n_2\times n_1\times n_3$ tensor given by transposing each of the frontal slices and then reversing the order of transposed slices 2 through $n_3$, i.e., $(\cX)^T_{i,j,1}=\cX_{j,i,1}$ and $(\cX)^T_{i,j,k}=\cX_{j,i,n_3-k+1}$ for $i=1,\ldots,n_1$, $j=1,\ldots,n_2$ and $k=2,\ldots,n_3$. The identity tensor $\cI\in\mathbb{R}^{n_1\times n_1\times n_3}$ is a tensor whose first frontal slice is an $n_1\times n_1$ identity matrix and all other frontal slices are zero matrices. A tensor $\cQ\in\mathbb{R}^{n_1\times n_1\times n_3}$ is called orthogonal if $\cQ*\cQ^T=\cQ^T*\cQ=\cI$. By default, the space $\mathbb{R}^{n_1\times n_2\times n_3}$ is considered as a Hilbert space equipped with the inner product
\[
\langle \cX,\cY\rangle = \sum_{i,j,k}\cX_{i,j,k}\cY_{i,j,k},
\]
and the Frobenius norm given by
\[
\norm{\cX}=\sqrt{\sum_{i,j,k}\cX_{i,j,k}^2}.
\]
Note that if $\bm\theta$ is a linear map from a tensor space to a vector space, then $\norm{\bm\theta}$ stands for the operator norm unless otherwise stated.
Further detailed definitions and examples can be found in \cite{kilmer2013third}. Next, we introduce the singular value decomposition (t-SVD), based on the t-product.

\begin{definition}\label{def:tSVD}\cite{kilmer2013third}
Given $\cX\in\mathbb{R}^{n_1\times n_2\times n_3}$, there exist $\cU\in\mathbb{R}^{n_1\times n_1\times n_3}$, $\cS\in \mathbb{R}^{n_1\times n_2\times n_3}$ and $\cV\in \mathbb{R}^{n_2\times n_2\times n_3}$ such that
\begin{equation}\label{eqn:tSVD}
\cX=\cU * \cS * \cV^T.
\end{equation}
Here $\cS$ is f-diagonal and the number of nonzero tubes in $\cS$ is called the \textbf{tubal rank} of $\cX$, denoted by $\rank_t(\cX)$. \end{definition}

\noindent\textbf{Remark.} Due to the relationship between tensor and matrix, the t-SVD form yields the matrix form
\[
\unfold(\cX)=\bcirc(\cU)\bcirc(\cS)\unfold(\cV^T).
\]
Note that $\unfold(\cV^T)\neq(\unfold(\cV))^T$.

Moreover, it can be shown that the reduced t-SVD of $\cX$ denoted by
\begin{equation}
\cX_r={\sum_{i=1}^r\cU(:,i,:)*\cS(i,i,:)*\cV(:,i,:)^T}
\label{eqn:Xr}
\end{equation}
is a best tubal-rank-$r$ approximation of $\cX$ in the sense that \cite[Theorem 4.3]{kilmer2011factorization}
\begin{equation}
\cX_r=\argmin_{\cZ\in \mathbb{T}_r}\norm{\cX-\cZ}\label{eqn:cXr}
\end{equation}
where
\begin{equation}
\mathbb{T}_r=\{\cX_1*\cX_2\,\mid\,\cX_1\in\mathbb{R}^{n_1\times r\times n_3},\,
{\cX_2}\in\mathbb{R}^{r\times n_2\times n_3}\}
\label{eqn:Tr}
\end{equation}
is the set of all tubal-rank at most $r$ tensors of the size $n_1\times n_2\times n_3$. In addition, due to the block diagonalization of block-circulant matrices with circulant blocks under the Fourier transform, t-SVD can be efficiently obtained by using the matrix SVD and the fast Fourier transform \cite{kilmer2011factorization,zhang2014novel}. More specifically, letting $F_{n_3}\in\mathbb{C}^{n_3\times n_3}$ be the unitary discrete Fourier transform matrix, we have
\[
(F_{n_3}\otimes I_{n_1})\cdot \bcirc(\cX)\cdot (F_{n_3}^*\otimes I_{n_3})=\bdiag(\cD)
\]
where $\otimes$ is the Kronecker product of two matrices and
\[
\widehat{\cD}(:,:,k)=\widehat{\cU}(:,:,k)\widehat{\cS}(:,:,k)\widehat{\cV}(:,:,k)^T.
 \]
where $\widehat{\cX}$ is the Fourier transform of $\cX$ along the third dimension, i.e., $\widehat{\cX}(i,j,:)=\mbox{fft}(\cX(i,j,:))$.
To make the paper self-contained, we include the t-SVD algorithm in Algorithm~\ref{alg:tsvd}. The operators $\mbox{fft}(\cdot,[\,\,],3)$ and $\mbox{ifft}(\cdot,[\,\,],3)$ represent the Fourier transform along the third dimension. Note that the Fourier transform can be replaced by other unitary transforms, e.g., discrete cosine transform \cite{xu2019fast}.

\begin{algorithm}[H]
\caption{Tensor Singular Value Decomposition (t-SVD) {\cite{kilmer2011factorization}}}\label{alg:tsvd}
\begin{algorithmic}
\State\textbf{Input:} $\cX\in\mathbb{R}^{n_1\times n_2\times n_3}$.
\State\textbf{Output:} $\cU,\cS,\cV$.
\State $\widehat{\cX}=\mbox{fft}(\cX,[\,\,],3)$.
\For {$i=1,2,\ldots,n_3$}
\State Find the SVD of ${\widehat{\cX}(:,:,i)}$ such that $\widehat{\cU}(:,:,i)\widehat{\cS}(:,:,i)\widehat{\cV}(:,:,i)^T=\widehat{\cX}(:,:,i)$
\EndFor
\State $\cU=\mbox{ifft}(\widehat{\cU},[\,\,],3)$
\State $\cS=\mbox{ifft}(\widehat{\cS},[\,\,],3)$
\State $\cV=\mbox{ifft}(\widehat{\cV},[\,\,],3)$
\end{algorithmic}
\end{algorithm}

Using the t-SVD, we define the following singular tube hard thresholding operator.
\begin{definition}\label{def:STHT}
For $\cX\in\mathbb{R}^{n_1\times n_2\times n_3}$ and $r\leq \min\{n_1,n_2\}$, the singular tube hard thresholding (STHT) operator $\cH_r$ is defined as \begin{equation}\label{eqn:STHT}
\cH_r(\cX)=\cU*\cH_r(\cS)*\cV^T,\quad \cH_r(\cS)(i,i,:)=\vo,\quad\mbox{for}\quad i=r+1,\ldots,\min\{n_1,n_2\}.
\end{equation}
Since the zero singular tubes do not affect the t-product, we use the best tubal-rank-$r$ approximation to express STHT as $\cH_r(\cX)=\cX_r$.
\end{definition}

\subsection{Functions on a Tensor Space}
In this section, we define novel concepts for a class of functions on a tensor space. By treating a tensor as a multi-dimensional array, we can define a differentiable function on a tensor space. Specifically, a function $f:\mathbb{R}^{n_1\times n_2\times n_3}\to\mathbb{R}$ is called differentiable if all partial derivatives $\frac{\partial f}{\partial \cX_{i,j,k}}$ exist, and in addition the gradient is simply given by $\nabla f(\cX)=(\frac{\partial f}{\partial \cX_{i,j,k}})_{n_1\times n_2\times n_3}$. Alternatively, we could first convert a tensor function to a multivariate one, compute its gradient and then rewrite it in tensor form. For example, if $f(\cX)=\langle \cA,\cX\rangle$ with $\cA,\cX\in\mathbb{R}^{n_1\times n_2\times n_3}$, then $\nabla f(\cX)=\cA$.

\begin{definition}\label{def:tRSC}
The function $f:\mathbb{R}^{n_1\times n_2\times n_3}\to\mathbb{R}$ satisfies the tubal-rank restricted strong convexity (tRSC) if there exists $\rho_{r}^->0$ such that
\begin{equation}\label{eqn:tRSC}
f(\cX_2)-f(\cX_1)-\Big\langle \nabla f(\cX_1),\cX_2-\cX_1\Big\rangle\geq \frac{\rho_{r}^-}2\norm{\cX_2-\cX_1}^2
\end{equation}
for $\cX_1,\cX_2\in\mathbb{R}^{n_1\times n_2\times n_3}$ with $\rank_t(\cX_1-\cX_2)\leq r$.
\end{definition}

\noindent\textbf{Remark.} Based on the definition of tRSC, we switch the role of $\cX_1$ and $\cX_2$ in \eqref{eqn:tRSC} and get
\begin{equation}\label{eqn:tRSCv2}
\langle \nabla f(\cX_2)-\nabla f(\cX_1),\cX_2-\cX_1\rangle \geq \rho_r^-\norm{\cX_2-\cX_1}^2.
\end{equation}

\begin{definition}\label{def:tRSS}
The function $f:\mathbb{R}^{n_1\times n_2\times n_3}\to \mathbb{R}$ satisfies the tubal-rank restricted strong smoothness (tRSS) if there exists $\rho_{r}^+>0$ such that
\begin{equation}\label{eqn:tRSS}
\norm{\nabla f(\cX_1)-\nabla f(\cX_2)}\leq \rho_{r}^+\norm{\cX_1-\cX_2}
\end{equation}
for $\cX_1,\cX_2\in\mathbb{R}^{n_1\times n_2\times n_3}$ with $\rank_t(\cX_1-\cX_2)\leq r$. Note that $\rho_r^+$ becomes the Lipschitz constant of $\nabla f$ when there is no restriction on the tubal-rank of $\cX_1$ and $\cX_2$.
\end{definition}

\noindent\textbf{Remark.} If $f$ satisfies the tRSS property and $\Omega=\mathrm{span}(\cX_1,\cX_2)$, i.e., the tensor space linearly spanned by $\cX_1$ and $\cX_2$, we can follow the proofs in \cite{nguyen2017linear,qin2021stochastic} to show the tubal-rank restricted co-coercivity
\begin{equation}\label{eqn:cocoer}
\norm{\cP_\Omega(\nabla f(\cX_2)-\nabla f(\cX_1))}^2\leq
\rho_r^+\langle \nabla f(\cX_1)-\nabla f(\cX_2),\cX_1-\cX_2\rangle
\end{equation}
for $\cX_1,\cX_2\in\mathbb{R}^{n_1\times n_2\times n_3}$ with $\rank_t(\cX_1-\cX_2)\leq r$.
Here $\cP_\Omega(\cdot)$ projects a tensor onto the linear space $\Omega$.

\begin{definition}\label{tRIP}\cite{zhang2019rip}
Consider a linear map $\bm\theta:\mathbb{R}^{n_1\times n_2\times n_3}\to\mathbb{R}^m$.
If there exists a constant $\delta_r\in(0,1)$ such that
\begin{equation}\label{eqn:tRIP}
(1-\delta_r)\norm{\cX}^2\leq \norm{\bm\theta(\cX)}^2\leq (1+\delta_r)\norm{\cX}^2
\end{equation}
for any tensor $\cX\in\mathbb{R}^{n_1\times n_2\times n_3}$ whose tubal rank is at most $r$, then $\bm\theta$ is said to satisfy the tensor-tubal-rank $r$ restricted isometry condition (tRIP) with the restricted isometry constant $\delta_r$.
\end{definition}

\begin{lemma}\label{lem:tRIP}
If $f(\cX)=\frac12\norm{\bm\theta(\cX)-\vy}^2$ where $\vy\in\mathbb{R}^m$ and $\bm\theta:\mathbb{R}^{n_1\times n_2\times n_3}\to\mathbb{R}^m$ is a linear map satisfying the tRIP with $\delta_r$, then we have
\[
\rho_r^-=1-\delta_r,\quad \rho_r^+=1+\delta_r.
\]
\end{lemma}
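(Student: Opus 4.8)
The plan is to reduce both claims to a single elementary identity for the quadratic $f$ and then read off the constants from the two tRIP inequalities. First I would write the gradient explicitly: since $f(\cX)=\frac12\norm{\bm\theta(\cX)-\vy}^{2}$ with $\bm\theta$ linear, the chain rule gives $\nabla f(\cX)=\bm\theta^{*}(\bm\theta(\cX)-\vy)$, where $\bm\theta^{*}:\R^{m}\to\R^{n_1\times n_2\times n_3}$ is the adjoint of $\bm\theta$ for the Frobenius inner product. Hence for any $\cX_1,\cX_2$ the gradient difference is $\nabla f(\cX_1)-\nabla f(\cX_2)=\bm\theta^{*}\bm\theta(\cX_1-\cX_2)$, and every quantity in Definitions~\ref{def:tRSC} and~\ref{def:tRSS} can be expressed through the single tensor $\cD:=\cX_2-\cX_1$, which by hypothesis satisfies $\rank_t(\cD)\le r$.

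Next I would compute the Bregman divergence of $f$. Putting $\cR_i=\bm\theta(\cX_i)-\vy$ so that $\cR_2-\cR_1=\bm\theta(\cD)$, a one-line expansion in which the $\vy$-dependent and cross terms cancel gives the exact identity $f(\cX_2)-f(\cX_1)-\langle\nabla f(\cX_1),\cX_2-\cX_1\rangle=\tfrac12\norm{\bm\theta(\cD)}^{2}$. Since $\rank_t(\cD)\le r$, the lower bound in \eqref{eqn:tRIP} yields $\tfrac12\norm{\bm\theta(\cD)}^{2}\ge\tfrac{1-\delta_r}{2}\norm{\cD}^{2}$, which is exactly \eqref{eqn:tRSC} with $\rho_r^{-}=1-\delta_r$; the symmetrized form \eqref{eqn:tRSCv2} then follows as in the remark, or directly from $\langle\bm\theta^{*}\bm\theta\cD,\cD\rangle=\norm{\bm\theta(\cD)}^{2}$.

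For the smoothness constant, the same identity together with the upper bound in \eqref{eqn:tRIP} gives $\tfrac12\norm{\bm\theta(\cD)}^{2}\le\tfrac{1+\delta_r}{2}\norm{\cD}^{2}$, the quadratic upper companion of tRSS. To reach the gradient-Lipschitz statement \eqref{eqn:tRSS} itself I would write $\norm{\nabla f(\cX_1)-\nabla f(\cX_2)}=\norm{\bm\theta^{*}\bm\theta\cD}=\sup\{\,\langle\bm\theta(\cD),\bm\theta(\cZ)\rangle:\norm{\cZ}=1\,\}$ and restrict attention to the subspace $\Omega=\mathrm{span}(\cX_1,\cX_2)$ from the remark following Definition~\ref{def:tRSS}: on $\Omega$ the map $\cP_\Omega\bm\theta^{*}\bm\theta\cP_\Omega$ is symmetric with quadratic form pinched between $(1-\delta_r)\norm{\cdot}^{2}$ and $(1+\delta_r)\norm{\cdot}^{2}$ by \eqref{eqn:tRIP}, which bounds $\norm{\cP_\Omega\bm\theta^{*}\bm\theta\cD}\le(1+\delta_r)\norm{\cD}$ and is the form in which tRSS (and the co-coercivity \eqref{eqn:cocoer}) is actually used, so $\rho_r^{+}=1+\delta_r$.

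The one delicate point is precisely this last step: $\bm\theta^{*}\bm\theta\cD$ need not have tubal rank $\le r$, so \eqref{eqn:tRIP} cannot be applied to it verbatim; the norm in \eqref{eqn:tRSS} must be read restricted to the low-tubal-rank subspace generated by the two arguments (equivalently, one argues through \eqref{eqn:cocoer}), after which the constant $1+\delta_r$ is immediate. Everything else is a single expansion of a quadratic together with the two inequalities \eqref{eqn:tRIP}, so I do not anticipate further obstacles.
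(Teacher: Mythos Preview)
Your approach is correct and matches the paper's in substance: both reduce to the identity that the Bregman divergence of a linear least-squares functional equals $\tfrac12\norm{\bm\theta(\cD)}^{2}$, from which the tRIP bounds read off the two constants. The paper's proof is only a one-line sketch that passes to the matrix representation $A\,\vect{\cX}$ of $\bm\theta$ and does the same computation in coordinates, whereas you work coordinate-free with $\bm\theta^{*}$; the content is identical. Your caution about the unprojected bound in \eqref{eqn:tRSS}---that $\bm\theta^{*}\bm\theta\cD$ need not itself have small tubal rank, so tRIP does not directly control its full Frobenius norm---is well-placed and is precisely why the paper only ever invokes tRSS through the projected co-coercivity form \eqref{eqn:cocoer}; reading $\rho_r^{+}=1+\delta_r$ in that restricted sense, as you do, is the right move.
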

The proof of this lemma is based on the matrix representation of the linear map $\bm\theta$, i.e., a matrix $A\in\mathbb{R}^{m\times (n_1n_2n_3)}$ exists with $A\,\vect{\cX}=\vect{\bm\theta(\cX)}$ where $\vect{\cdot}$ is an operator that converts a tensor to a vector by column-wise stacking at each frontal slice followed by the slice stacking.

\section{Proposed Algorithms}\label{sec:alg}
\subsection{Iterative Singular Tube Hard Thresholding}
Let $f:\mathbb{R}^{n_1\times n_2\times n_3}\to\mathbb{R}$ be a differentiable function. Consider the minimization problem
\begin{equation}
\min_{\cX\in\mathbb{R}^{n_1\times n_2\times n_3}} f(\cX)\quad\st\quad \rank_t(\cX)\leq r,
\end{equation}
which can also be written as
\[
\min_{\cX\in\mathbb{T}_r} f(\cX)
\]
where $\mathbb{T}_r$ is defined in \eqref{eqn:Tr}. Following the idea of the alternating minimization algorithm, we can obtain an algorithm that alternates unconstrained minimization of $f$ and projection to $\mathbb{T}_r$. Thus,
based on the STHT operator defined in Definition~(\ref{def:STHT}), we propose the iterative singular tube hard thresholding algorithm (ISTHT) in Algorithm~\ref{alg:TIHT}, which alternates gradient descent and singular tube hard thresholding using t-SVD. Empirically, we have observed that t-SVD returns an f-diagonal tensor whose first frontal slice has much larger diagonal entries than those in the remaining frontal slices, thus we recommend scaling the input tensor before performing STHT and then re-scaling back. Furthermore, the step size $\gamma_t$ for gradient descent can be set as a constant or defined by an adaptive method during the iterations, such as line search, a trust region method or the Barzilai-Borwein method \cite{barzilai1988two}. Moreover, the Nesterov acceleration technique \cite{nesterov1983method} has been integrated into gradient descent, i.e., Nesterov Accelerated Gradient Descent, with further developments of popular momentum-based methods in deep learning, such as AdaGrad \cite{duchi2011adaptive}. It has been shown that Nesterov Accelerated Gradient Descent can achieve a convergence rate with $O(1/N^2)$ while gradient descent typically converges with the rate $O(1/N)$ where $N$ is the number of iterations \cite{nesterov1983method}. More specifically, at the $t$-th iteration, the step size $\gamma_t$ is updated by
\[
\lambda_{t+1}=\frac{1+\sqrt{1+4\lambda_{t}^2}}{2},\quad \gamma_{t+1}=\frac{1-\lambda_{t}}{\lambda_{t+1}},\quad\mbox{for } t=0,1,\ldots,N-1,
\]
with the initial $\lambda_0=1$. The gradient descent step thereby becomes the linear combination of the original gradient descent step and the previous step with weight specified by {$\gamma_{t+1}$}. The detailed algorithm is summarized in Algorithm~\ref{alg:aTIHT}. All the algorithms terminate when either the maximum number of iterations is reached or the stopping criterion $\norm{\cX^{t+1}-\cX^t}/\norm{\cX^t}<tol$ with a preassigned tolerance $tol$ is met.
Empirical results in Section~\ref{sec:exp} will show that properly selected step size regime may significantly improve accuracy and convergence.

\begin{algorithm}
\caption{Iterative Singular Tube Hard Thresholding (ISTHT)}\label{alg:TIHT}
\begin{algorithmic}
\State\textbf{Input:} tubal rank $r$, step size $\gamma$, tolerance $tol$.
\State\textbf{Output:} $\cX^{t+1}$.
\State\textbf{Initialize:} $\cX^0=\mathbf{0}\in\mathbb{R}^{n_1\times n_2\times n_3}$.
\For {$t=0,1,\ldots,N-1$}
\State $\cZ^{t+1}=\cX^t-\gamma\nabla f(\cX^t)$
\State $\cX^{t+1}=\mathcal{H}_r(\cZ^{t+1})$
\State Exit if the stopping criterion is met.
\EndFor
\end{algorithmic}
\end{algorithm}

\begin{algorithm}
\caption{Accelerated Iterative Singular Tube Hard Thresholding (aISTHT)}\label{alg:aTIHT}
\begin{algorithmic}
\State\textbf{Input:} tubal rank $r$, {step size $\gamma$,} tolerance $tol$.
\State\textbf{Output:} $\cX^{t+1}$.
\State\textbf{Initialize:} $\cX^0=\widetilde{\cZ}^0=\mathbf{0}\in\mathbb{R}^{n_1\times n_2\times n_3}$, $\lambda_0=1$.
\For {$t=0,1,\ldots,N-1$}
\State $\lambda_{t+1}=\frac{1+\sqrt{1+4\lambda_{t}^2}}{2}$
\State $\gamma_{t+1}=\frac{1-\lambda_{t}}{\lambda_{t+1}}$
\State $\widetilde{\cZ}^{t+1}=\cX^t-{\gamma}\nabla f(\cX^t)$
\State $\cZ^{t+1}=(1-\gamma_{t+1})\widetilde{\cZ}^{t+1}+\gamma_{t+1}\widetilde{\cZ}^t$
\State $\cX^{t+1}=\mathcal{H}_r(\cZ^{t+1})$
\State Exit if the stopping criterion is met.
\EndFor
\end{algorithmic}
\end{algorithm}

\subsection{Stochastic Iterative Singular Tube Hard Thresholding}
Consider a collection of functions $f_j:\mathbb{R}^{n_1\times n_2\times n_3}\to\mathbb{R}$ with $j=1,\ldots,M$ and their average
\[
F(\cX)=\frac1M\sum_{j=1}^Mf_j(\cX),\quad \cX\in\mathbb{R}^{n_1\times n_2\times n_3}.
\]
Now we consider the following tubal-rank constrained minimization problem
\begin{equation}\label{eqn:model}
\min_{\cX\in\mathbb{R}^{n_1\times n_2\times n_3}}F(\cX)\quad\st\quad \rank_t(\cX)\leq r.
\end{equation}
By combining the stochastic gradient descent and best tubal-rank approximation steps, we proposed the Stochastic Iterative Singular Tube Hard Thresholding (StoISTHT) in Algorithm~\ref{alg:StoTIHT}.

\begin{algorithm}
\caption{Stochastic Iterative Singular Tube Hard Thresholding (StoISTHT)}\label{alg:StoTIHT}
\begin{algorithmic}
\State\textbf{Input:} tubal rank $r$, step size $\gamma$, tolerance $tol$, probabilities $\{p(i)\}_{i=1}^M$.
\State\textbf{Output:} $\cX^{t+1}$.
\State\textbf{Initialize:} $\cX^0=\mathbf{0}\in\mathbb{R}^{n_1\times n_2\times n_3}$.
\For {$t=0,1,\ldots,N-1$}
\State Randomly select an index $i_t\in\{1,2,\ldots,M\}$ with probability $p(i_t)$
\State $\cZ^{t+1}=\cX^t-\frac{\gamma}{Mp(i_t)}\nabla f_{i_t}(\cX^t)$
\State $\cX^{t+1}=\mathcal{H}_r(\cZ^{t+1})$
\State Exit if the stopping criterion is met.
\EndFor
\end{algorithmic}
\end{algorithm}

Based on the mini-batch technique \cite{needell2016batched}, we propose an accelerated version--Batched Stochastic Iterative Singular Tube Hard Thresholding (BStoISTHT), summarized in Algorithm~\ref{alg:BStoTIHT}.
\begin{algorithm}
\caption{Batched Stochastic Iterative Singular Tube Hard Thresholding (BStoISTHT)}\label{alg:BStoTIHT}
\begin{algorithmic}
\State\textbf{Input:} tubal rank $r$, step size $\gamma$, tolerance $tol$, {batch probabilities $\{p(\tau)\}$}.
\State\textbf{Output:} $\cX^{t+1}$.
\State\textbf{Initialize:} $\cX^0=\mathbf{0}\in\mathbb{R}^{n_1\times n_2\times n_3}$.
\For {$t=0,1,\ldots,N-1$}
\State Randomly select an index batch $\tau_t\subseteq\{1,2,\ldots,d\}$ of size $b$ with probability $p(\tau_t)$
\State $\cZ^{t+1}=\cX^t-\frac{\gamma}{Mp(\tau_t)}(\frac1b\sum_{j\in\tau_t}\nabla f_{j}(\cX^t))$
\State $\cX^{t+1}=\mathcal{H}_r(\cZ^{t+1})$
\State Exit if the stopping criterion is met.
\EndFor
\end{algorithmic}
\end{algorithm}

\section{Convergence Analysis}\label{sec:cnvg}
In this section, we provide the convergence analysis for the proposed algorithms, which can are extended from the matrix case \cite{nguyen2017linear,qin2021stochastic} to the more general tensor setting \cite{rauhut2017low}.
\begin{lemma}\label{lem1}
Let $\cX,\cY\in\mathbb{R}^{n_1\times n_2\times n_3}$ with tubal-rank $r$. Then we have
\[
\norm{\cH_r(\cX)-\cY}^2\leq 2\langle \cH_r(\cX)-\cY,\cX-\cY\rangle.
\]
\end{lemma}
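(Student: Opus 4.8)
The plan is to reduce the claimed bound to the best-approximation property of the reduced t-SVD. Write $\cP:=\cH_r(\cX)$. By Definition~\ref{def:STHT}, $\cP=\cX_r$ is a best tubal-rank-$r$ approximation of $\cX$, so the variational characterization \eqref{eqn:cXr} gives $\norm{\cX-\cP}\le\norm{\cX-\cZ}$ for every $\cZ\in\mathbb{T}_r$. The first step is to observe that $\cY$ is an admissible competitor: since $\rank_t(\cY)\le r$, the factors in the reduced t-SVD of $\cY$ (padded with zero slices up to width $r$ if $\rank_t(\cY)<r$) exhibit $\cY$ as an element of $\mathbb{T}_r$ in the sense of \eqref{eqn:Tr}. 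Hence
\[
\norm{\cX-\cP}^2\le\norm{\cX-\cY}^2 .
\]

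Next I would expand the right-hand side by inserting $\cP$. Using the Hilbert-space structure of $\mathbb{R}^{n_1\times n_2\times n_3}$ together with $\cX-\cY=(\cX-\cP)+(\cP-\cY)$,
\[
\norm{\cX-\cY}^2=\norm{\cX-\cP}^2+2\langle\cX-\cP,\cP-\cY\rangle+\norm{\cP-\cY}^2,
\]
so subtracting $\norm{\cX-\cP}^2$ from the previous display yields $0\le 2\langle\cX-\cP,\cP-\cY\rangle+\norm{\cP-\cY}^2$. Finally I would rewrite the cross term via $\cX-\cP=(\cX-\cY)-(\cP-\cY)$, i.e. $\langle\cX-\cP,\cP-\cY\rangle=\langle\cX-\cY,\cP-\cY\rangle-\norm{\cP-\cY}^2$; substituting and collecting terms gives $0\le 2\langle\cP-\cY,\cX-\cY\rangle-\norm{\cP-\cY}^2$, which is exactly the assertion.

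I do not anticipate a genuine obstacle: this is the standard "projection onto a (possibly non-convex) set" manipulation, and the only points needing care are (i) confirming $\cY\in\mathbb{T}_r$ even when $\rank_t(\cY)<r$, and (ii) noting that only the value inequality $\norm{\cX-\cP}\le\norm{\cX-\cZ}$ from \eqref{eqn:cXr} is used, so any non-uniqueness of the best tubal-rank-$r$ approximation is immaterial. It is also worth remarking that the hypothesis $\rank_t(\cX)\le r$ is never invoked in this argument — in that case $\cH_r(\cX)=\cX$ and the bound degenerates to the trivial $\norm{\cX-\cY}^2\le 2\norm{\cX-\cY}^2$ — so the same proof in fact establishes the stronger statement in which $\cX$ is arbitrary, which is the form one would actually apply in the convergence analysis (with $\cX$ a gradient-descent iterate and $\cY$ the low-tubal-rank target).
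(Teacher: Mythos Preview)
Your proof is correct and follows essentially the same route as the paper: both hinge on the best-approximation inequality $\norm{\cX-\cH_r(\cX)}\le\norm{\cX-\cY}$ and then perform an equivalent polarization/expansion (the paper inserts $\cX$ into $\norm{\cH_r(\cX)-\cY}^2$, you insert $\cP$ into $\norm{\cX-\cY}^2$, which is the same algebra rearranged). Your observations that $\rank_t(\cX)\le r$ is never used and that only the value inequality from \eqref{eqn:cXr} is needed are accurate and worth keeping.
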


\begin{proof}
Since the operator $\cH_r$ gives the best tubal-rank-$r$ approximation, we have
\[
\norm{\cH_r(\cX)-\cX}\leq \norm{\cY-\cX}.
\]
Then we get
\[
\begin{aligned}
\norm{\cH_r(\cX)-\cY}^2&=\norm{\cH_r(\cX)-\cX+\cX-\cY}^2\\
&=\norm{\cH_r(\cX)-\cX}^2+\norm{\cX-\cY}^2+2\langle \cH_r(\cX)-\cX,\cX-\cY\rangle\\
&\leq 2\norm{\cY-\cX}^2+2\langle \cH_r(\cX)-\cX,\cX-\cY\rangle\\
&=2\langle \cH_r(\cX)-\cY,\cX-\cY\rangle.
\end{aligned}
\]

\end{proof}

\begin{theorem}\label{thm:alg1conv}
Let $f:\mathbb{R}^{n_1\times n_2\times n_3}\to\mathbb{R}^m$ satisfy the tRSC and tRSS, and $\cX^*$ be a minimizer of $f$ with the tubal-rank at most $r$. Then there exist $\kappa,\sigma>0$ such that the recovery error at the $t$-th iteration of Algorithm~\ref{alg:TIHT} is bounded from above
\[
\norm{\cX^t-\cX^*}\leq \kappa^{{t}}\norm{\cX^0-\cX^*}+\frac{\sigma}{1-\kappa}.
\]
\end{theorem}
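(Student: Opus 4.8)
The plan is to establish a one–step contraction of the form
\[
\norm{\cX^{t+1}-\cX^*}\le \kappa\norm{\cX^t-\cX^*}+\sigma
\]
for suitable constants $\kappa\in(0,1)$ and $\sigma>0$ depending only on the tRSC/tRSS constants $\rho_{2r}^-,\rho_{2r}^+$ (note the relevant tubal-rank budget is $2r$, since differences of tubal-rank-$r$ tensors have tubal rank at most $2r$) and the step size $\gamma$, and then unroll this recursion. Unrolling is routine: iterating gives $\norm{\cX^t-\cX^*}\le\kappa^t\norm{\cX^0-\cX^*}+\sigma(1+\kappa+\cdots+\kappa^{t-1})\le\kappa^t\norm{\cX^0-\cX^*}+\frac{\sigma}{1-\kappa}$, and since $\kappa^t\le\kappa$ for $t\ge1$ (and the bound is trivial at $t=0$ if we absorb constants), we get the stated form. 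So the whole game is the single-step estimate.

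First I would apply Lemma~\ref{lem1} with $\cX=\cZ^{t+1}=\cX^t-\gamma\nabla f(\cX^t)$ and $\cY=\cX^*$. Both $\cX^{t+1}=\cH_r(\cZ^{t+1})$ and $\cX^*$ have tubal rank at most $r$, so their difference has tubal rank at most $2r$, and the lemma gives
\[
\norm{\cX^{t+1}-\cX^*}^2\le 2\langle \cX^{t+1}-\cX^*,\ \cZ^{t+1}-\cX^*\rangle
=2\langle \cX^{t+1}-\cX^*,\ \cX^t-\cX^*-\gamma\nabla f(\cX^t)\rangle.
\]
Then I would split the gradient term by inserting $\nabla f(\cX^*)$, writing $\nabla f(\cX^t)=(\nabla f(\cX^t)-\nabla f(\cX^*))+\nabla f(\cX^*)$; the term involving $\nabla f(\cX^*)$ is the ``noise'' contribution that will produce $\sigma$ (if $\cX^*$ were an unconstrained minimizer this would vanish, but since the minimum is over $\mathbb T_r$ only a restricted optimality holds, so $\cP_\Omega\nabla f(\cX^*)$ need not be zero — hence the additive constant). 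For the main term, let $\Omega$ be the span of $\cX^t,\cX^{t+1},\cX^*$ so that $\cX^{t+1}-\cX^*\in\Omega$ and all relevant differences have tubal rank $\le 2r$; I would bound $\langle\cX^{t+1}-\cX^*,(\I-\gamma\nabla\,)\dots\rangle$ using the restricted co-coercivity \eqref{eqn:cocoer} and tRSC \eqref{eqn:tRSCv2} to show the operator $\cZ\mapsto \cP_\Omega(\cZ-\gamma\nabla f)$ acts as a contraction on the relevant subspace whenever $\gamma$ is small enough (the standard gradient-descent contraction factor $\sqrt{1-2\gamma\rho^-+\gamma^2(\rho^+)^2}$, or a variant thereof with the factor $2$ from Lemma~\ref{lem1} folded in). Cauchy–Schwarz on the cross terms, together with the elementary fact that $\norm{\cX^{t+1}-\cX^*}^2\le 2\norm{\cX^{t+1}-\cX^*}\cdot(\text{stuff})$ implies $\norm{\cX^{t+1}-\cX^*}\le 2(\text{stuff})$, converts the squared inequality into the desired linear recursion.

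The main obstacle I anticipate is the bookkeeping of tubal ranks and the projection $\cP_\Omega$: one must be careful that every application of tRSC, tRSS, and co-coercivity is to pairs of tensors whose difference genuinely has tubal rank at most the budget used in the hypotheses (so the constants should really carry subscript $2r$ or even $3r$ depending on how $\Omega$ is chosen), and that the co-coercivity inequality \eqref{eqn:cocoer} is applied with $\Omega$ chosen \emph{before} invoking it rather than after. A secondary subtlety is pinning down exactly which constant plays the role of $\sigma$ — it will be something like $2\gamma\norm{\cP_\Omega\nabla f(\cX^*)}$ or, if one only assumes $\cX^*$ is a global minimizer over $\mathbb T_r$, a bound on that quantity in terms of problem data; getting the contraction factor $\kappa=\sqrt{2\gamma\rho_{2r}^+(1-\gamma\rho_{2r}^-)}$ (or similar) strictly below $1$ will also impose an explicit admissible range on $\gamma$, which I would state as part of the hypotheses or extract in the proof.
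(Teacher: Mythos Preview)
Your proposal is correct and follows essentially the same route as the paper: apply Lemma~\ref{lem1}, split off $\nabla f(\cX^*)$ as the additive ``noise'' term, project onto $\Omega=\mathrm{span}(\cX^*,\cX^t,\cX^{t+1})$, and combine co-coercivity~\eqref{eqn:cocoer} with tRSC~\eqref{eqn:tRSCv2} to get the contraction factor, arriving at $\kappa=2\sqrt{1-(2-\gamma\rho_{3r}^+)\gamma\rho_{3r}^-}$ and $\sigma=2\gamma\norm{\cP_\Omega\nabla f(\cX^*)}$. The only correction is the one you already flagged in your ``obstacles'' paragraph: because $\Omega$ is spanned by three rank-$r$ tensors, the relevant budget is $3r$, not the $2r$ you use at the start, so the tRSC/tRSS constants should carry subscript $3r$ throughout.
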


\begin{proof}

Let $\cR^t=\cX^t-\cX^*$, $\Omega$ be the linear space spanned by $\cX^*$, $\cX^t$ and $\cX^{t+1}$, and $\cP_\Omega$ be the orthogonal projection from $\mathbb{R}^{n_1\times n_2\times n_3}$ to $\Omega$. {Note that $\Omega$ and $\cP_\Omega$ may change over the iterations.} Then for any $\cX\in\mathbb{R}^{n_1\times n_2\times n_3}$, $\cP_\Omega(\cX)$ must have tubal rank $\leq 3r$.

By Lemma~\ref{lem1}, we calculate the norm square of ${\cR}^{t+1}$ as follows
\[
\begin{aligned}
\norm{\cR^{t+1}}^2&\leq2\langle \cX^{t+1}-\cX^*,\cZ^{t+1}-\cX^*\rangle\\
&=2\langle \cX^{t+1}-\cX^*,\cX^{t}-\gamma\nabla f(\cX^t)-\cX^*\rangle\\
&=2\langle \cX^{t+1}-\cX^*,\cX^{t}-\cX^*-\gamma(\nabla f(\cX^t)-\nabla f(\cX^*))\rangle
-2\langle \cX^{t+1}-\cX^*,\gamma\nabla f(\cX^*)\rangle
\end{aligned}
\]
The definition of the orthogonal projection $\cP_\Omega$ yields that
\[
\langle \cR^{t+1},\cP_{\Omega^c}(\nabla f(\cX^t)-\nabla f(\cX^*))\rangle =0,\quad
\langle \cR^{t+1},\cP_{\Omega^c}\nabla f(\cX^*)\rangle =0.
\]
Thus we have
\[
\begin{aligned}
\norm{\cR^{t+1}}^2&\leq 2\langle \cR^{t+1},\cR^t-\gamma\cP_\Omega\big(\nabla f(\cX^t)-\nabla f(\cX^*)\big)\rangle
-2\langle \cR^{t+1},\gamma\cP_\Omega\nabla f(\cX^*)\rangle\\
&\leq 2\norm{\cR^{t+1}}\left(\norm{\cR^t-\gamma\cP_\Omega\big(\nabla f(\cX^t)-\nabla f(\cX^*)\big)}
+\norm{\gamma\cP_\Omega\nabla f(\cX^*)}\right)
\end{aligned}
\]
which implies that
\[
\norm{\cR^{t+1}}\leq 2\norm{\cR^t-\gamma\cP_\Omega\big(\nabla f(\cX^t)-\nabla f(\cX^*)\big)}
+2\gamma\norm{\cP_\Omega\nabla f(\cX^*)}.
\]
By using the co-coercivity~\eqref{eqn:cocoer} and the reformulation \eqref{eqn:tRSCv2} of tRSC, we have
\[\begin{aligned}
&\norm{\cR^t-\gamma\cP_\Omega\big(\nabla f(\cX^t)-\nabla f(\cX^*)\big)}^2\\
&=\norm{\cR^t}^2+\gamma^2\norm{\cP_\Omega\big(\nabla f(\cX^t)-\nabla f(\cX^*)\big)}^2-2\gamma\langle \cR^t, \cP_\Omega\big(\nabla f(\cX^t)-\nabla f(\cX^*)\big)\rangle\\
&\leq \norm{\cR^t}^2+\gamma^2\rho_s^+\langle \cR^t,\nabla f(\cX^t)-\nabla f(\cX^*)\rangle -2\gamma\langle \cR^t,\nabla f(\cX^t)-\nabla f(\cX^*)\rangle\\
&\leq \norm{\cR^t}^2-(2\gamma-\gamma^2\rho_{3r}^+)\langle \cR^t,\nabla f(\cX^t)-\nabla f(\cX^*)\rangle\\
&\leq \left(1-(2-\gamma\rho_{3r}^+)\gamma\rho_{3r}^-\right)\norm{\cR^t}^2.
\end{aligned}\]

Therefore, we get
\[
\norm{\cR^{t+1}}\leq 2\sqrt{1-(2-\gamma\rho_{3r}^+)\gamma\rho_{3r}^-}\norm{\cR^t}+2\gamma\norm{\cP_\Omega\nabla f(\cX^*)}:=\kappa\norm{\cR^t}+\sigma.
\]
By recursively applying the above inequality, we get
\[
\norm{\cR^{t}}\leq \kappa^t\norm{\cR^0}+\sigma\sum_{i=0}^{t-2}\kappa^i\leq \kappa^t\norm{\cR^0}+\frac{\sigma}{1-\kappa}
\]
which completes the proof.
\end{proof}

\begin{theorem}\label{thm:alg1linear}
Let $f(\cX)=\frac1{2}\norm{\cA(\cX)-\vy}^2$ with $\vy\in\mathbb{R}^m$. If the linear map $\cA:\mathbb{R}^{n_1\times n_2\times n_3}\to\mathbb{R}^m$ satisfies the tRIP with the restricted isometry constant $\delta_s$ and the measurements $\vy=\cA(\cX^*)+\ve$ with $\rank_t(\cX^*)\leq r$ and $\norm{\ve}\leq\varepsilon$, then the $t$-th iteration of Algorithm~\ref{alg:TIHT} has a bounded recovery error
\[
\norm{\cX^t-\cX^*}\leq \kappa^t\norm{\cX^0-\cX^*}+\frac{\sigma}{1-\kappa},
\]
where
\begin{equation}\label{eqn:alg1coef}
\kappa = 2(\mid 1-\gamma\mid +\gamma\delta_{3r}),\quad\sigma = 2\gamma\varepsilon\sqrt{1+\delta_{2r}}.
\end{equation}
\end{theorem}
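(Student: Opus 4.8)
The plan is to specialize the proof of Theorem~\ref{thm:alg1conv} to the quadratic loss, using Lemma~\ref{lem:tRIP} to replace the abstract tRSC/tRSS/co-coercivity estimates by direct consequences of the tRIP of $\cA$. First I would record the two gradient identities that drive the whole argument: since $\nabla f(\cX)=\cA^{*}(\cA(\cX)-\vy)$ and $\vy=\cA(\cX^{*})+\ve$, writing $\cR^{t}=\cX^{t}-\cX^{*}$ gives
\[
\nabla f(\cX^{t})-\nabla f(\cX^{*})=\cA^{*}\cA(\cR^{t}),\qquad \nabla f(\cX^{*})=-\cA^{*}\ve .
\]

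Next, exactly as in the proof of Theorem~\ref{thm:alg1conv}, set $\Omega=\mathrm{span}(\cX^{*},\cX^{t},\cX^{t+1})$, so that every tensor in $\Omega$ has tubal rank at most $3r$ and in particular $\cR^{t},\cR^{t+1}\in\Omega$. Applying Lemma~\ref{lem1} as in Theorem~\ref{thm:alg1conv} (with $\cX=\cZ^{t+1}$ and $\cY=\cX^{*}$, which has tubal rank $\le r$) and substituting $\cZ^{t+1}-\cX^{*}=\cR^{t}-\gamma\nabla f(\cX^{t})=(\mathrm{Id}-\gamma\cA^{*}\cA)\cR^{t}+\gamma\cA^{*}\ve$, I obtain
\[
\norm{\cR^{t+1}}^{2}\le 2\big\langle \cR^{t+1},(\mathrm{Id}-\gamma\cA^{*}\cA)\cR^{t}\big\rangle+2\gamma\big\langle \cR^{t+1},\cA^{*}\ve\big\rangle .
\]
For the first term I would project using $\cR^{t}=\cP_{\Omega}\cR^{t}$ and the self-adjointness of $\cP_{\Omega}$ to rewrite it as $\langle\cR^{t+1},\cP_{\Omega}(\mathrm{Id}-\gamma\cA^{*}\cA)\cP_{\Omega}\cR^{t}\rangle$; the operator $\cP_{\Omega}(\mathrm{Id}-\gamma\cA^{*}\cA)\cP_{\Omega}$ is self-adjoint on $\Omega$ with Rayleigh quotient $1-\gamma\norm{\cA(\cW)}^{2}/\norm{\cW}^{2}\in[\,1-\gamma(1+\delta_{3r}),\,1-\gamma(1-\delta_{3r})\,]$ for $\cW\in\Omega$ by the tRIP at level $3r$, so its operator norm is at most $\max\{\abs{1-\gamma(1+\delta_{3r})},\abs{1-\gamma(1-\delta_{3r})}\}\le\abs{1-\gamma}+\gamma\delta_{3r}$. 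For the noise term I would \emph{not} project onto $\Omega$ but instead write $\langle\cR^{t+1},\cA^{*}\ve\rangle=\langle\cA(\cR^{t+1}),\ve\rangle$ and use $\rank_{t}(\cR^{t+1})\le 2r$ with the tRIP at level $2r$ to get $\abs{\langle\cA(\cR^{t+1}),\ve\rangle}\le\sqrt{1+\delta_{2r}}\,\norm{\cR^{t+1}}\,\varepsilon$; this is exactly where $2r$ rather than $3r$ enters $\sigma$.

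Inserting these two estimates (with Cauchy--Schwarz on the first inner product) and cancelling one factor of $\norm{\cR^{t+1}}$ yields the one-step contraction
\[
\norm{\cR^{t+1}}\le 2(\abs{1-\gamma}+\gamma\delta_{3r})\norm{\cR^{t}}+2\gamma\varepsilon\sqrt{1+\delta_{2r}}=\kappa\norm{\cR^{t}}+\sigma ,
\]
and unrolling this linear recursion together with the geometric series $\sum_{j\ge 0}\kappa^{j}=1/(1-\kappa)$ gives $\norm{\cX^{t}-\cX^{*}}\le\kappa^{t}\norm{\cX^{0}-\cX^{*}}+\sigma/(1-\kappa)$, which is meaningful in the regime $\kappa<1$, i.e.\ $\abs{1-\gamma}+\gamma\delta_{3r}<\tfrac12$. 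The only delicate point is the restricted-isometry bookkeeping: keeping straight that $\Omega$ carries tubal rank $\le 3r$ whereas the single residual $\cR^{t+1}$ carries tubal rank $\le 2r$, and justifying the operator-norm bound through the self-adjointness of $\cP_{\Omega}(\mathrm{Id}-\gamma\cA^{*}\cA)\cP_{\Omega}$ on $\Omega$; everything else is a direct calculation.
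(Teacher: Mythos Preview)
Your proposal is correct and follows essentially the same route as the paper: apply Lemma~\ref{lem1} to get $\norm{\cR^{t+1}}^{2}\le 2\langle\cR^{t+1},\cZ^{t+1}-\cX^{*}\rangle$, split the right-hand side into the operator piece $(\mathrm{Id}-\gamma\cA^{*}\cA)\cR^{t}$ restricted to $\Omega$ and the noise piece $\langle\cA(\cR^{t+1}),\ve\rangle$, then use the tRIP at levels $3r$ and $2r$ respectively before unrolling the recursion. The only cosmetic difference is that the paper bounds $\norm{(\cI-\gamma\cA_{\Omega}^{*}\cA_{\Omega})\cR^{t}}$ by writing it as $(1-\gamma)\cR^{t}+\gamma(\cI-\cA_{\Omega}^{*}\cA_{\Omega})\cR^{t}$ and invoking $\norm{\cI-\cA_{\Omega}^{*}\cA_{\Omega}}\le\delta_{3r}$, whereas you reach the same bound $\abs{1-\gamma}+\gamma\delta_{3r}$ directly via the Rayleigh quotient of the self-adjoint operator $\cP_{\Omega}(\mathrm{Id}-\gamma\cA^{*}\cA)\cP_{\Omega}$.
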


\begin{proof}
First the gradient of $f$ is given by
\[
\nabla f(\cX^t)=\cA^*(\cA(\cX^t)-\vy)
\]
where the adjoint operator $\cA^*:\mathbb{R}^m\to\mathbb{R}^{n_1\times n_2\times n_3}$ is defined by $\langle \cA(\cX),\vz\rangle =\langle \cX,\cA^*(\vz)\rangle$ for any $\cX\in\mathbb{R}^{n_1\times n_2\times n_3}$ and $\vz\in\mathbb{R}^m$.

Let $\cR^{t}=\cX^t-\cX^*$, $\Omega=\mathrm{span}(\cX^*,\cX^t,\cX^{t+1})$, and $\cP_{\Omega}$ be the orthogonal projection from $\mathbb{R}^{n_1\times n_2\times n_3}$ onto $\Omega$. To simplify the notation, we denote
\[
\cA_{\Omega}(\cX)=\cA(\cP_\Omega(\cX))=\cA\circ\cP_\Omega(\cX),\quad \cX\in\mathbb{R}^{n_1\times n_2\times n_3}.
\]
Then we have $\cA_{\Omega} (\cR^{t+1})=\cA(\cR^{t+1})$ and $\cA_{\Omega}(\cR^t)=\cA(\cR^t)$. Note that the operator $\cA_{\Omega}$ is different from the composite operator $\cP_\Omega\circ \nabla f$ even when $\nabla f$ is a linear map. By Lemma~\ref{lem1}, we estimate $\norm{\cR^{t+1}}$ as follows
\[
\begin{aligned}
\norm{\cR^{t+1}}^2&\leq 2\langle \cR^{t+1},\cZ^{t+1}-\cX^*\rangle\\
&=2\langle \cR^{t+1},\cX^t-\gamma\cA^*(\cA(\cX^t)-\vy)-\cX^*\rangle\\
&=2\langle \cR^{t+1},\cX^t-\cX^*-\gamma\cA^*(\cA(\cX^t)-\cA(\cX^*))\rangle
+2\langle \cR^{t+1},\gamma\cA^*(\ve)\rangle\\
&=2\langle \cR^{t+1},(\cI-\gamma\cA^*\cA)(\cR^t)\rangle +2\gamma\langle \cA(\cR^{t+1}),\ve\rangle\\
&=2\langle \cR^{t+1},(\cI-\gamma\cA_{\Omega}^*\cA_{\Omega})(\cR^t)\rangle +2\gamma\langle \cA(\cR^{t+1}),\ve\rangle\\
&\leq 2\norm{\cR^{t+1}}\left(\norm{(\cI-\gamma\cA_{\Omega}^*\cA_{\Omega})(\cR^t)}
+\gamma\varepsilon\sqrt{1+\delta_{2r}}\right){,}
\end{aligned}
\]
where $\cI:{\mathbb{R}^{n_1\times n_2\times n_3}}\to{\mathbb{R}^{n_1\times n_2\times n_3}}$ is the identity map. The last inequality follows from applying the Cauchy-Schwarz inequality to each term in the summation, the tRIP assumption on $\mathcal{A}$ for the second term (note that $\mathcal{R}^{t+1}$ has tubal-rank at most $2r$), and lastly, the assumption that $\|\ve\|\leq \epsilon$.

Therefore we have
\[\begin{aligned}
\norm{\cR^{t+1}}&\leq 2\norm{(\cI-\gamma\cA_{\Omega}^*\cA_{\Omega})(\cR^t)}+2\gamma\varepsilon\sqrt{1+\delta_{2r}}\\
&\leq 2\norm{(1-\gamma)\cR^t}+2\gamma\norm{(\cI-\cA_{\Omega}^*\cA_{\Omega})(\cR^t)}+2\gamma\varepsilon\sqrt{1+\delta_{2r}}\\
&\leq 2\left(\mid 1-\gamma\mid +\gamma\delta_{3r}\right)\norm{\cR^t}+2\gamma\varepsilon\sqrt{1+\delta_{2r}}.
\end{aligned}\]
The last inequality holds due to the fact that the operator norm of the map $\cI-\cA_{\Omega}^*\cA_\Omega$ can be bounded from above by
\[
\norm{\cI-\cA_{\Omega}^*\cA_{\Omega}}\leq \sup_{\cX:\rank_t(\cX)\leq 3r\atop \norm{\cX}=1}\mid\norm{\cX}^2-\norm{\cA(\cX)}^2\mid=\delta_{3r}.
\]
Thus we get
\[
\norm{\cR^{t+1}}\leq \kappa \norm{\cR^t}+\sigma,
\]
where $\kappa = 2(\mid 1-\gamma\mid+\gamma\delta_{3r})$ and $\sigma=2\gamma\varepsilon\sqrt{1+\delta_{2r}}$. By the recursive relationship, we get
\[
\norm{\cR^t}\leq \kappa^t\norm{\cR^0}+\frac{\sigma}{1-\kappa},
\]
provided that $\kappa<1$, which completes the proof.

\end{proof}

Based on the tRIP of sub-Gaussian ensembles in \cite{zhang2019tensor} and Theorem~\ref{thm:alg1linear}, we get the following corollary about the convergence of Algorithm~\ref{alg:TIHT} for the sub-Gaussian measurements.
\begin{corollary}\label{cor:subgauss}
Let $f(\cX)=\cA(\cX)$ where $\cA:\mathbb{R}^{n_1\times n_2\times n_3}\to\mathbb{R}^m$ is a linear sub-Gaussian measurement ensemble, and the solution $\cX^*$ with tubal-rank $r$ satisfy $\vy=\cA(\cX^*)+\ve$. If $\gamma=1$ and the number of measurements satisfies
\[
m\geq C\max\{r(n_1+n_2+1)n_3,\log(\epsilon^{-1})\},
\]
with $C>0$ which depends on the sub-Gaussian parameter, then Algorithm~\ref{alg:TIHT} converges to the solution $\cX^*$ with probability at least $1-\epsilon$.

\end{corollary}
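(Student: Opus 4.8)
The plan is to combine the deterministic linear-measurement guarantee of Theorem~\ref{thm:alg1linear} (applied to $f(\cX)=\frac12\norm{\cA(\cX)-\vy}^2$, which is the intended reading of the objective here) with the known tRIP behavior of sub-Gaussian ensembles. First I would recall from \cite{zhang2019tensor} that if the entries of $\cA$ are drawn from a (suitably normalized) sub-Gaussian distribution, then there are constants $c_1,c_2>0$ depending only on the sub-Gaussian parameter such that, for any prescribed $\delta\in(0,1)$, the map $\cA$ satisfies the tRIP of order $3r$ with constant $\delta_{3r}\le\delta$ with failure probability at most $2\exp(-c_1 m)$, provided $m\ge c_2\,\delta^{-2}\,r(n_1+n_2+1)n_3$. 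Using the isometry order $3r$ rather than $r$ only rescales the hidden constant, since the sample complexity is linear in the order; the factor $(n_1+n_2+1)n_3$ is exactly the intrinsic dimension of a tubal-rank-$r$ tensor of size $n_1\times n_2\times n_3$.

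Next I would fix the target isometry constant so that Theorem~\ref{thm:alg1linear} applies with $\gamma=1$. Since $\gamma=1$ forces $\kappa=2(\,\abs{1-1}+\gamma\delta_{3r}\,)=2\delta_{3r}$, it is enough to require $\delta_{3r}<\tfrac12$, say $\delta_{3r}\le\tfrac14$. Substituting $\delta=\tfrac14$ into the sub-Gaussian bound produces the first term $C\,r(n_1+n_2+1)n_3$ in the stated lower bound on $m$. To push the failure probability below $\epsilon$ we need $2\exp(-c_1 m)\le\epsilon$, i.e. $m\ge c_1^{-1}\log(2\epsilon^{-1})$, which is precisely the second term $C\log(\epsilon^{-1})$ inside the maximum after enlarging $C$. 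Taking $C$ to be the larger of the two resulting constants, the hypothesis on $m$ guarantees that on an event $\cE$ of probability at least $1-\epsilon$ the map $\cA$ satisfies the tRIP of order $3r$ with $\delta_{3r}\le\tfrac14$.

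Finally, on the event $\cE$ I would invoke Theorem~\ref{thm:alg1linear} directly: with $\kappa=2\delta_{3r}\le\tfrac12<1$ and $\sigma=2\gamma\varepsilon\sqrt{1+\delta_{2r}}=2\varepsilon\sqrt{1+\delta_{2r}}$, the iterates of Algorithm~\ref{alg:TIHT} obey $\norm{\cX^t-\cX^*}\le\kappa^t\norm{\cX^0-\cX^*}+\sigma/(1-\kappa)$. Since $\kappa<1$, the geometric term vanishes and $\norm{\cX^t-\cX^*}$ converges linearly to the noise floor $\sigma/(1-\kappa)$; when $\ve=\vo$ this floor is $0$, so $\cX^t\to\cX^*$, and in general one gets convergence to an $O(\varepsilon)$-neighborhood of $\cX^*$. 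Intersecting with the probability bound for $\cE$ gives the claimed conclusion with probability at least $1-\epsilon$.

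The step I expect to be the main obstacle is the first one: one must check the precise constants in the sub-Gaussian tRIP statement of \cite{zhang2019tensor} to confirm that $\delta_{3r}$ can be driven below $\tfrac12$ at isometry order $3r$ with the stated sample count, and that the tail is genuinely of exponential type $\exp(-c_1 m)$ so that it is the $\log(\epsilon^{-1})$ term that controls the confidence level. Once the tRIP event is secured with a small enough constant, the remainder is a routine substitution into Theorem~\ref{thm:alg1linear} together with the observation that $\gamma=1$ makes the $\abs{1-\gamma}$ contribution to $\kappa$ disappear.
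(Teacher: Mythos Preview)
Your proposal is correct and follows exactly the route the paper indicates: the paper does not write out a separate proof for this corollary but simply states that it follows by combining the sub-Gaussian tRIP bound from \cite{zhang2019tensor} with Theorem~\ref{thm:alg1linear}. Your argument fleshes this out precisely, including the key observation that $\gamma=1$ kills the $\abs{1-\gamma}$ term so that $\kappa=2\delta_{3r}$, and that driving $\delta_{3r}$ below $\tfrac12$ via the sub-Gaussian sample-complexity bound (with the $\log(\epsilon^{-1})$ term handling the confidence level) is what yields the stated condition on $m$.
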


\begin{theorem}\label{thm:alg2conv}
Let $\cX^*$ be a feasible solution of \eqref{eqn:model} and $\cX^0$ be the initial guess. Assume that $F$ satisfies tRSC with the constant $\rho_r^{-}$ and each $f_i$ satisfies tRSS with the constant $\rho_r^+(i)$. Then there exist a contraction coefficient $\kappa>0$ and a tolerance coefficient $\sigma>0$ such that the expectation of the recovery error at the $t$-th iteration of Algorithm~\ref{alg:StoTIHT} is bounded from above via
\begin{equation}\label{eqn:SMVStoIHT_E}
E\norm{\cX^{t}-\cX^*}\leq \kappa^{t}\norm{\cX^0-\cX^*}+\sigma.
\end{equation}
If $\kappa<1$, then Algorithm~\ref{alg:StoTIHT} generates a sequence $\{\cX^t\}$ which converges to the desired solution $\cX^*$.
\end{theorem}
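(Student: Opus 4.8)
The plan is to combine the deterministic argument of Theorem~\ref{thm:alg1conv} with the standard bookkeeping for stochastic gradients, the crucial point being that the rescaled stochastic gradient is an unbiased estimate of $\nabla F$:
\[
\mathbb{E}_{i_t}\!\left[\frac{1}{Mp(i_t)}\nabla f_{i_t}(\cX)\right]=\sum_{i=1}^{M}p(i)\cdot\frac{1}{Mp(i)}\nabla f_{i}(\cX)=\nabla F(\cX).
\]
Write $\cR^{t}=\cX^{t}-\cX^*$, let $\Omega=\mathrm{span}(\cX^*,\cX^{t},\cX^{t+1})$ so that $\cP_\Omega$ has tubal rank at most $3r$ and fixes both $\cR^{t}$ and $\cR^{t+1}$, and apply Lemma~\ref{lem1} --- whose proof uses only the best-approximation property of $\cH_r$ together with $\rank_t(\cX^*)\le r$, hence applies with $\cX=\cZ^{t+1}$ even though $\cZ^{t+1}$ need not be low tubal rank. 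This yields $\norm{\cR^{t+1}}^2\le 2\langle\cR^{t+1},\cP_\Omega(\cZ^{t+1}-\cX^*)\rangle$, and hence $\norm{\cR^{t+1}}\le 2\norm{\cP_\Omega(\cZ^{t+1}-\cX^*)}$.

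Next I would expand $\cP_\Omega(\cZ^{t+1}-\cX^*)=\cR^{t}-\frac{\gamma}{Mp(i_t)}\cP_\Omega\nabla f_{i_t}(\cX^{t})$, insert $\pm\frac{\gamma}{Mp(i_t)}\cP_\Omega\nabla f_{i_t}(\cX^*)$, and split by the triangle inequality into a contraction term $\cR^{t}-\frac{\gamma}{Mp(i_t)}\cP_\Omega\bigl(\nabla f_{i_t}(\cX^{t})-\nabla f_{i_t}(\cX^*)\bigr)$ and a noise term $\frac{\gamma}{Mp(i_t)}\cP_\Omega\nabla f_{i_t}(\cX^*)$. Taking the conditional expectation given the history $\mathcal{F}_t$ through $\cX^{t}$: for the contraction term I apply Jensen's inequality, expand the square, use $\langle\cR^{t},\cP_\Omega w\rangle=\langle\cR^{t},w\rangle$ (since $\cR^{t}\in\Omega$), bound $\norm{\cP_\Omega(\nabla f_{i_t}(\cX^{t})-\nabla f_{i_t}(\cX^*))}^2$ by the tubal-rank-restricted co-coercivity \eqref{eqn:cocoer} of $f_{i_t}$ at rank $3r$ (which also forces $\langle\cR^{t},\nabla f_{i_t}(\cX^{t})-\nabla f_{i_t}(\cX^*)\rangle\ge0$), and then average over $i_t$ using the unbiasedness identity so that the deterministic quantity $\langle\cR^{t},\nabla F(\cX^{t})-\nabla F(\cX^*)\rangle$ appears, which the tRSC reformulation \eqref{eqn:tRSCv2} bounds below by $\rho_{3r}^-\norm{\cR^{t}}^2$. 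Collecting terms, the conditional expectation of the squared contraction term is at most $(1-\gamma\mu\,\rho_{3r}^-)\norm{\cR^{t}}^2$ with $\mu=2-\gamma\max_i\frac{\rho_{3r}^+(i)}{Mp(i)}$, while the noise term is bounded by $\sigma_0:=\frac{2\gamma}{M}\sum_i\sup_{\Omega'}\norm{\cP_{\Omega'}\nabla f_i(\cX^*)}$ (supremum over tubal-rank-$\le3r$ subspaces), a quantity independent of $t$. This gives the one-step recursion
\[
\mathbb{E}\bigl[\norm{\cR^{t+1}}\,\big|\,\mathcal{F}_t\bigr]\le\kappa\norm{\cR^{t}}+\sigma_0,\qquad\kappa:=2\sqrt{1-\gamma\mu\,\rho_{3r}^-},
\]
and taking total expectations, iterating, and summing the geometric series ($\sum_{j\ge0}\kappa^j\le\frac1{1-\kappa}$ in the regime $\kappa<1$) yields \eqref{eqn:SMVStoIHT_E} with $\sigma=\sigma_0/(1-\kappa)$; the final assertion follows.

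I expect the main obstacle to be the handling of the random support: the subspace $\Omega$ depends on $\cX^{t+1}$, hence on the random index $i_t$, so the co-coercivity estimate and the bound on $\norm{\cP_\Omega\nabla f_{i_t}(\cX^*)}$ must be stated uniformly over all admissible tubal-rank-$\le3r$ subspaces before the resulting scalar inequalities are averaged, and the nonuniform weights $1/(Mp(i_t))$ must be propagated correctly through the averaging --- this is precisely where the $\max_i\frac{\rho_{3r}^+(i)}{Mp(i)}$ in $\mu$, and the implicit step-size restriction making $\mu>0$ (so that $\kappa<1$ becomes achievable), enter. Everything else is a faithful tensor transcription of the Euclidean StoIHT analysis.
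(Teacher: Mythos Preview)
Your proposal is correct and follows essentially the same route as the paper: apply Lemma~\ref{lem1}, project onto $\Omega=\mathrm{span}(\cX^*,\cX^t,\cX^{t+1})$, split into a contraction term and a noise term, bound the contraction term via co-coercivity \eqref{eqn:cocoer} for $f_{i_t}$ together with tRSC \eqref{eqn:tRSCv2} for $F$ (the paper defers this step to \cite{nguyen2017linear}), and iterate the resulting one-step recursion --- your contraction coefficient $\kappa=2\sqrt{1-(2-\gamma\alpha_{3r})\gamma\rho_{3r}^-}$ with $\alpha_{3r}=\max_i\rho_{3r}^+(i)/(Mp(i))$ is exactly the paper's. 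If anything you are more careful than the paper about the dependence of $\Omega$ on $i_t$ (hence the supremum over rank-$\le3r$ subspaces in your $\sigma_0$) and about why Lemma~\ref{lem1} applies to $\cZ^{t+1}$.
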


\begin{proof}
Let $\cR^t=\cX^t-\cX^*$, $\Omega$ be the linear space spanned by $\cX^*$, $\cX^t$ and $\cX^{t+1}$, and $\cP_\Omega$ be the orthogonal projection from $\mathbb{R}^{n_1\times n_2\times n_3}$ to $\Omega$. Then for any $\cX\in\mathbb{R}^{n_1\times n_2\times n_3}$, $\cP_\Omega(\cX)$ must have tubal rank $\leq 3r$.

By Lemma~\ref{lem1}, we calculate the norm square of $R^{t+1}$ as follows
\[
\begin{aligned}
\norm{\cR^{t+1}}^2&\leq2\langle \cX^{t+1}-\cX^*,\cZ^{t+1}-\cX^*\rangle\\
&=2\langle \cX^{t+1}-\cX^*,\cX^{t}-\frac{\gamma}{Mp(i_t)}\nabla f_{i_t}(\cX^t)-\cX^*\rangle\\
&=2\langle \cX^{t+1}-\cX^*,\cX^{t}-\cX^*-\frac{\gamma}{Mp(i_t)}(\nabla f_{i_t}(\cX^t)-\nabla f_{i_t}(\cX^*))\rangle\\
&\phantom{=}-2\langle \cX^{t+1}-\cX^*,\frac{\gamma}{Mp(i_t)}\nabla f_{i_t}(\cX^*)\rangle
\end{aligned}
\]
The definition of the orthogonal projection $\cP_\Omega$ yields that
\[
\langle \cR^{t+1},\cP_{\Omega^c}(\nabla f_{i_t}(\cX^t)-\nabla f_{i_t}(\cX^*))\rangle =0,\quad
\langle \cR^{t+1},\cP_{\Omega^c}\left(\nabla f_{i_t}(\cX^*)\right)\rangle =0.
\]
Thus we have
\[
\begin{aligned}
&\phantom{\leq} \norm{\cR^{t+1}}^2\\
&\leq 2\langle \cR^{t+1},\cR^t-\frac{\gamma}{Mp(i_t)}\cP_\Omega\big(\nabla f_{i_t}(\cX^t)-\nabla f_{i_t}(\cX^*)\big)\rangle
-2\langle \cR^{t+1},\frac{\gamma}{Mp(i_t)}\cP_\Omega\nabla f_{i_t}(\cX^*)\rangle\\
&\leq 2\norm{\cR^{t+1}}\left(\norm{\cR^t-\frac{\gamma}{Mp(i_t)}\cP_\Omega\big(\nabla f_{i_t}(\cX^t)-\nabla f_{i_t}(\cX^*)\big)}+\norm{\frac{\gamma}{Mp(i_t)}\cP_\Omega\nabla f_{i_t}(\cX^*)}\right).
\end{aligned}
\]
Let $I_t=\{i_1,\ldots,i_t\}$ be the set of indices that are randomly drawn from the discrete distribution $\{p(i)\}_{i=1}^M$ after $t$ iterations. By taking the expectation on both sides of the above inequality with respect to $i_t$ conditioned on $I_{t-1}$, we get
\[\begin{aligned}
E_{i_t\mid I_{t-1}}\norm{\cR^{t+1}}&\leq 2E_{i_t\mid I_{t-1}}\norm{\cR^t-\frac{\gamma}{Mp(i_t)}\cP_\Omega\big(\nabla f_{i_t}(\cX^t)-\nabla f_{i_t}(\cX^*)\big)}\\
&\phantom{\leq} +2E_{i_t\mid I_{t-1}}\norm{\frac{\gamma}{Mp(i_t)}\cP_\Omega\nabla f_{i_t}(\cX^*)}.
\end{aligned}\]
By adapting the proof in \cite{nguyen2017linear}, we are able to show that
\[
E_{i_t\mid I_{t-1}}\norm{\cR^t-\frac{\gamma}{Mp(i_t)}\cP_\Omega\big(\nabla f_{i_t}(\cX^t)-\nabla f_{i_t}(\cX^*)\big)}
\leq \sqrt{1-(2-\gamma\alpha_{3r})\gamma\rho_{3r}^-}\norm{\cR^t},
\]
where $\alpha_{3r}=\max_i\frac{\rho_{3r}^+(i)}{Mp(i)}$. In addition, we have
\[
E_{i_t\mid I_{t-1}}\norm{\frac{\gamma}{Mp(i_t)}\cP_\Omega\nabla f_{i_t}(\cX^*)}\leq \frac{\gamma}{M\max_ip(i)}E_{i_t}\norm{\cP_\Omega f_{i_t}(\cX^*)}.
\]
Then we get
\[
E_{i_t\mid I_{t-1}}\norm{\cR^{t+1}}\leq \kappa \norm{\cR^t}+\sigma,
\]
where
\[
 \kappa=2\sqrt{1-(2-\gamma\alpha_{3r})\gamma\rho_{3r}^-},\quad\mbox{and}\quad
 \sigma = \frac{2\gamma}{M\max_ip(i)}E_{i_t}\norm{\cP_\Omega f_{i_t}(\cX^*)}.
\]
By taking the expectation on both sides with respect to $I_{t-1}$, we get the desired inequality
\[
E_{I_t}\norm{\cR^{t+1}}\leq \kappa E_{I_{t-1}}\norm{\cR^t}+\sigma,
\]
which further yields
\[
E_{I_t}\norm{\cX^t-\cX^*}\leq \kappa^t\norm{\cX^0-\cX^*}+\frac{\sigma}{1-\kappa}.
\]
Applying this result recursively completes the proof.
\end{proof}

\section{Numerical Experiments}\label{sec:exp}

In this section, we show how to apply the proposed algorithms to solve multiple application problems, including tensor compressive sensing recovery and low-tubal-rank tensor completion, and conduct a variety of numerical experiments to show the performance of the proposed algorithms. For the performance comparison metric, we use the recovery error (RE) for tensor recovery defined by
\[
RE=\|\cX-\widetilde{\cX}\|/\norm{\cX}
\]
where $\widetilde{\cX}$ is an estimation of the ground truth $\cX$. All numerical experiments were run in MATLAB R2021b on a desktop computer with 64GB RAM and a 3.10GHz Intel Core i9-9960X CPU.

\subsection{Tensor Compressive Sensing}
By extending the matrix case, we consider the linear tensor compressive sensing problem where each measurement is generated by the inner product of two tensors, i.e., a sensing tensor and a low-tubal-rank tensor to be reconstructed. Let $\cX\in\mathbb{R}^{n_1\times n_2\times n_3}$ be the tensor with low tubal rank. Given a collection of sensing tensors $\{\cA_j\}_{j=1}^{M}$, the obtained data vector $\vy\in\mathbb{R}^M$ is generated by a linear map $\bm\theta$ from $\mathbb{R}^{n_1\times n_2\times n_3}$ to $\mathbb{R}^M$ given by
\begin{equation}\label{eqn:A}
\bm\theta(\cX)=\begin{bmatrix}\langle \cA_1,\cX\rangle \\ \vdots\\ \langle \cA_M,\cX\rangle\end{bmatrix},\quad \cA_j\in\mathbb{R}^{n_1\times n_2\times n_3},\quad j=1,\ldots,M.
\end{equation}
Let $\vect{\cdot}$
be the operator that converts a tensor into a vector by columnwise stacking of each frontal slice followed by slicewise stacking. The linear map $\bm\theta$ can be therefore represented as
\[
\bm\theta(\cX)=\begin{bmatrix}\vect{\cA_1}^T \\ \vdots\\ \vect{\cA_{M}}^T\end{bmatrix} \vect{\cX}:=A\,\vect{\cX}
\]
where $A\in\mathbb{R}^{M\times (n_1n_2n_3)}$ and $\vect{\cX}\in\mathbb{R}^{n_1n_2n_3}$. Then the compressive sensing low-tubal-rank tensor recovery problem boils down to solving \eqref{eqn:model} with the following objective function
\begin{equation}\label{eqn:CSobj}
F(\cX)=\frac1{2M}\norm{\bm\theta(\cX)-\vy}^2=\frac1M\sum_{\ell=1}^M{\frac12\Big| \sum_{i,j,k} \cA^\ell_{ijk}\cX_{ijk}-y_\ell\Big|^2}:=\frac1M\sum_{\ell=1}^Mf_\ell(\cX).
\end{equation}
As shown in \cite{zhang2019rip}, sub-Gaussian measurements satisfy tRIP and thereby satisfy tRSC and tRSS. Therefore, our proposed algorithms can be applied with guaranteed convergence.

To justify the performance of the proposed algorithms, we test synthetic data, where the sensing matrix $A$ and the ground truth $\cX$ with low tubal rank consist of independently identically distributed (i.i.d.) samples from the Gaussian distribution.
In our first set of experiments, we compare the Algorithms~\ref{alg:TIHT} and \ref{alg:aTIHT} with various tensor tubal ranks, measurement sampling rates, and noise ratios. Specifically, all ground truth tensors are of the size $20\times 20\times 10$, the maximum number of iterations is set as 500 and the stepsize $\tau=100$. In Figure~\ref{fig:exp1_rank}, we plot the reconstruction error for the two algorithms versus the iteration number with the tubal rank of $\cX$ ranging in $\{1,2,3,4\}$. By varying the sampling rate {$M/N\in\{0.3,0.4,0.5,0.6\}$ with $N=n_1n_2n_3$} and fixing the tubal rank as 2, we get the results shown in Figure~\ref{fig:exp1_samp}. From these two figures, one can see that Algorithm~\ref{alg:aTIHT} converges faster than Algorithm~\ref{alg:TIHT} in terms of reconstruction error. To further test the robustness to noise, we add to the measurements $\vy$ various types of noise with standard deviation $\max_{1\leq i\leq M}|y_i|{\sigma}$ where ${\sigma}\in\{0.01,0.02,0.03,0.04\}$. The corresponding results are displayed in Figure~\ref{fig:exp1_noise}, which shows that Algorithm~\ref{alg:aTIHT} yields faster decay in the error than Algorithm~\ref{alg:TIHT} but eventually converges to the same limit. Overall, if the tensor rank is relatively low and the sampling rate is high, then both algorithms have fast error decay and Algorithm~\ref{alg:aTIHT} with Nesterov's adaptive stepsize selection strategy effectively accelerates the convergence.
\begin{figure}[h]
\centering
\includegraphics[width=0.62\textwidth]{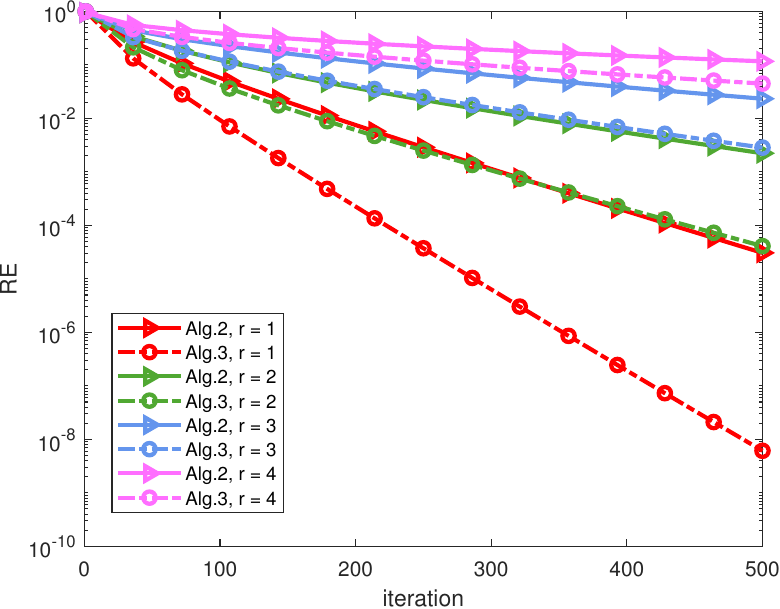}
\caption{Reconstruction error comparison with various tensor tubal ranks. The ground truth is noise-free and the sampling rate is fixed as ${M/N}=0.60$. }\label{fig:exp1_rank}
\end{figure}

\begin{figure}[h]
\centering
\includegraphics[width=0.62\textwidth]{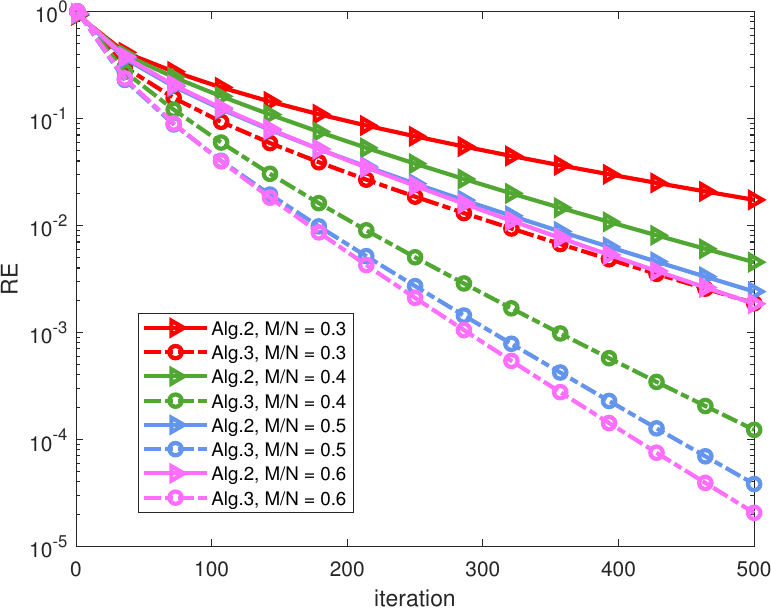}
\caption{Reconstruction error comparison with various sampling rates. The ground truth is noise-free with fixed tubal rank as 2.}\label{fig:exp1_samp}
\end{figure}

\begin{figure}[h]
\centering
\includegraphics[width=0.62\textwidth]{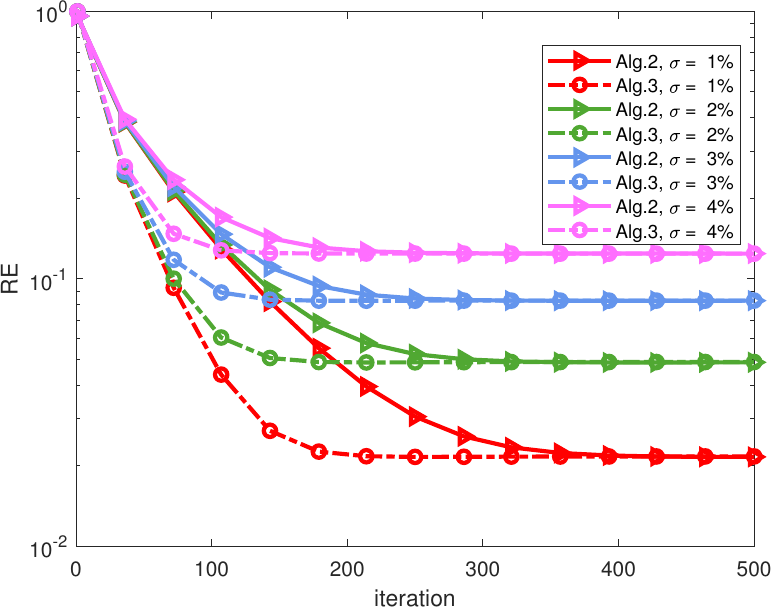}
\caption{Reconstruction error comparison with various noisy measurements.}\label{fig:exp1_noise}
\end{figure}

Furthermore, we test the noise-free tensor data of size $20\times 20\times 10$ with tubal rank one and a fixed sampling rate of 60\%, and apply the Algorithm~\ref{alg:BStoTIHT} with various batch sizes to recover the tensor. The results for using the batch size $b$ ranging in $\{200,400,600,800,1000\}$, i.e., 0.05\% to 0.25\% of the entire tensor size, are shown in Figure~\ref{fig:exp1_alg5_bs}. One can see that the stochastic version of the algorithm takes less iterations but more running time by increasing the batch size. If the batch size increases by 200, then about 4 more seconds in running time will be desired.

\begin{figure}[h]
\centering
\includegraphics[width=0.62\textwidth]{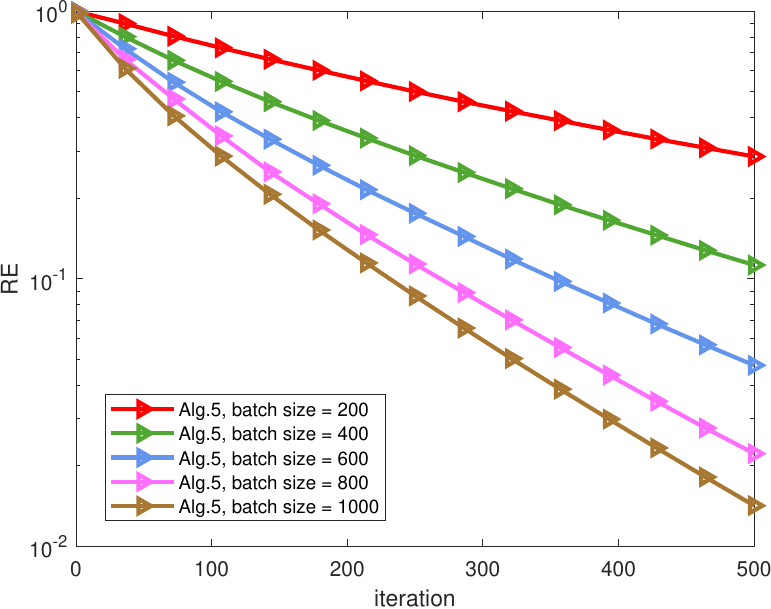}
\caption{Reconstruction error comparison for Algorithm \ref{alg:BStoTIHT} with various batch sizes. The running times for the batch sizes $b=200,400,600,800,1000$ are (in seconds): 8.38, 12.27, 16.51, 20.52, 24.66, respectively.}
\label{fig:exp1_alg5_bs}
\vspace{-10pt}
\end{figure}

\subsection{Color Image Inpainting}
The second application for tensor recovery we show here is color image inpainting. Notice that color images naturally have three-dimensional tensor structures with the third dimension specifying the number of color channels. Assume that a RGB color image $\cX\in\mathbb{R}^{n_1\times n_2\times 3}$ with $n_1\times n_2$ pixels is contaminated by  Gaussian noise with missing pixel intensities. Since the tubal rank of $\cX$ is no greater than each dimension, we consider the following color image inpainting model
\begin{equation}\label{eqn:CIprob}
\min_{\cY\in\mathbb{R}^{n_1\times n_2\times n_3}} \frac12\norm{\cP_{{\Lambda}}(\cX-\cY)}^2,\quad \rank_t(\cY)\leq k.
\end{equation}
Although it can be reformulated as a linear map from $\mathbb{R}^{n_1\times n_2\times n_3}$ to $\mathbb{R}^M$, we express $\cP$ implicitly, which is implemented by extracting entries in {$\Lambda$}. Here we compare the proposed Algorithm ~\ref{alg:TIHT} with two popular color image inpainting methods: (1) Coherence Transport based Inpainting  (CTI) \cite{bornemann2007fast} with Matlab command \verb"inpaintCoherent"; (2) EXemplar-based inpainting in Tensor filling order (EXT) \cite{criminisi2004region,le2013hierarchical} with the Matlab command \verb"inpaintExemplar".

To start with, we create a synthetic color image, i.e., checkerboard image, with size $128\times 128\times 3$ and tubal rank 2. Note that the three color channels have different intensities. The observed image is then obtained by occluding all the pixels from a square of size $80\times 80$ in the center of the image. In Figure~\ref{fig:exp2_syn}, we show the observed image with occlusions and our result. Figure~\ref{fig:exp2_con} contains the plots for the recovery error and the objective function values versus the iteration number in the Algorithm~\ref{alg:TIHT}.
One can see that our algorithm can achieve $10^{-4}$ for the recovery accuracy within 150 iterations while the other two comparing methods yield large errors $\geq 10^{-2}$. Notice that both CTI and EXT are based on exploiting the local image patch similarity while potentially skipping or putting less weight on preserving global repetitive patterns. They may struggle or fail when applied to fill large holes. In contrast, our algorithm prioritizes global similarity using the low tubal-rank data representation and performs better for this task. When the ground truth data has a relatively large tubal rank, Algorithm~\ref{alg:TIHT} will still converge to a decent result but very slowly. In the worst scenario when $r=\min\{n_1,n_2\}$, it behaves similar to gradient descent and $\cH_r$ takes no effect.

For the natural image test, we download a facade image, ``101\_rectified\_cropped'', from \url{https://people.ee.ethz.ch/~daid/FacadeSyn/} and then crop it to a color image of size $200\times 200\times 3$. Note that this image has tubal rank 3. In Figure~\ref{fig:exp2_nat}, we show the observed image with an occluded box of size $80\times 80$ in the center and our recovered result. Their recovery error and objective function value plots are shown in Figure~\ref{fig:exp2_con}, which show a swamp effect \cite{qi2021triple}. To achieve an RE of $10^{-4}$, more than 300 iterations are desired. This implies that the increase of tubal rank would request more iterations to achieve the same accuracy.

\setlength{\tabcolsep}{2pt}
\begin{figure}[t]
\centering
\begin{tabular}{cccc}
\includegraphics[width=.24\textwidth]{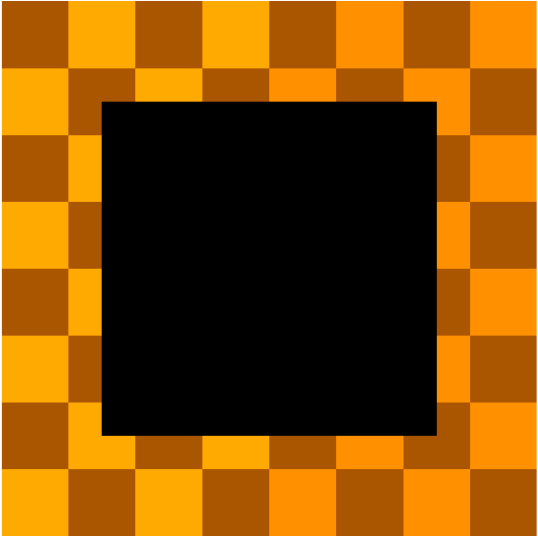}&
\includegraphics[width=.24\textwidth]{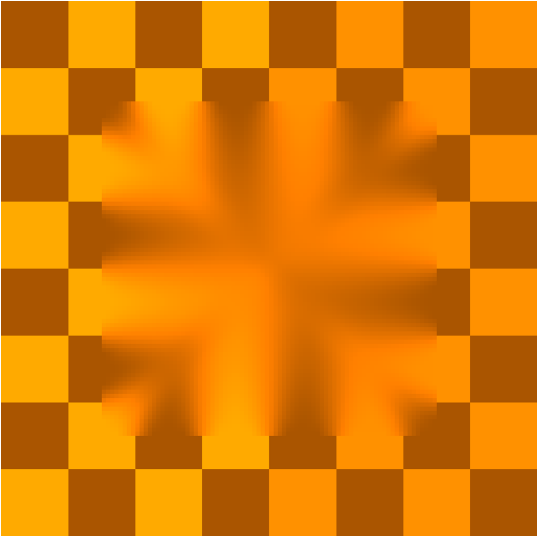}&
\includegraphics[width=.24\textwidth]{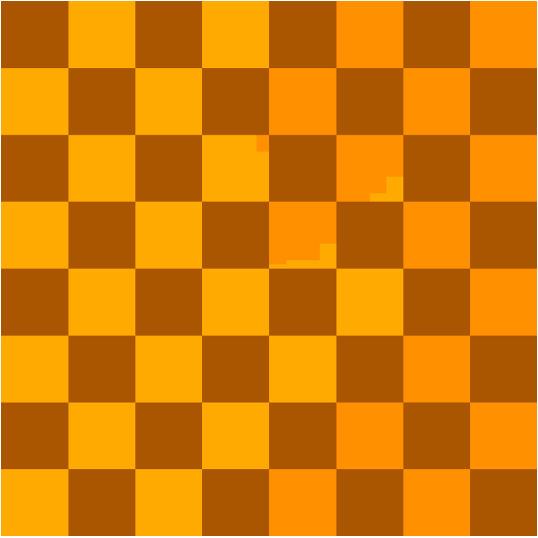}&
\includegraphics[width=.24\textwidth]{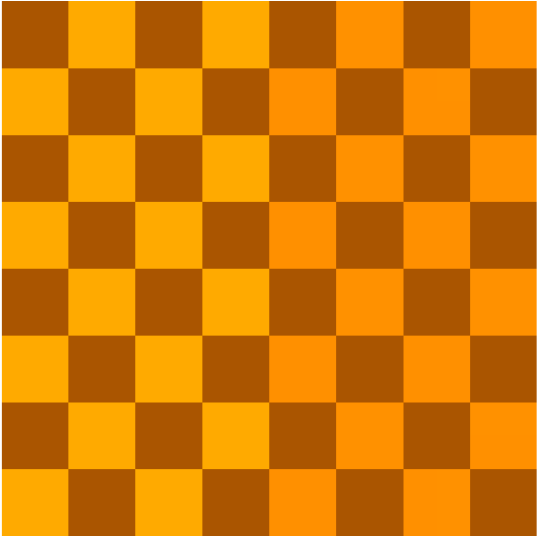}\\
(a) Observed image & (b) CTI &
(c) EXT  & (d) Our result\\
& RE=0.1945 & RE=0.0385 & RE=$1.29\times10^{-8}$
\end{tabular}
\vspace{-10pt}
\caption{Recovery of a checkerboard image with tubal rank 2 via Algorithm ~\ref{alg:TIHT}.}\label{fig:exp2_syn}
\end{figure}

\begin{figure}[t]
\centering
\begin{tabular}{cccc}
\includegraphics[width=.24\textwidth]{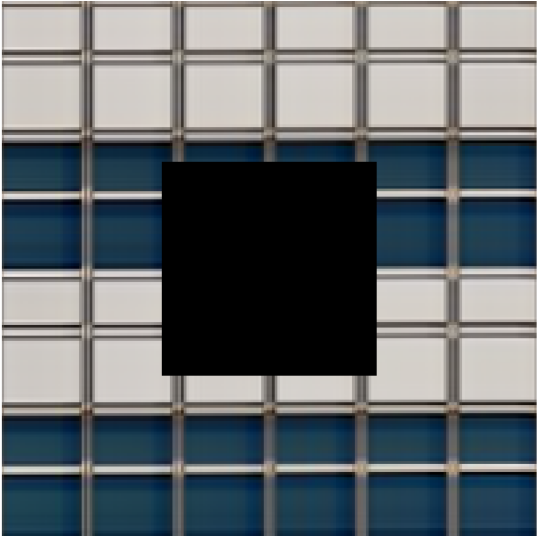}&
\includegraphics[width=.24\textwidth]{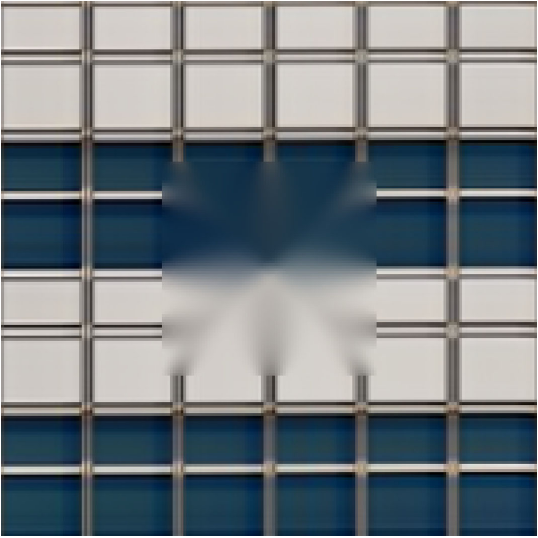}&
\includegraphics[width=.24\textwidth]{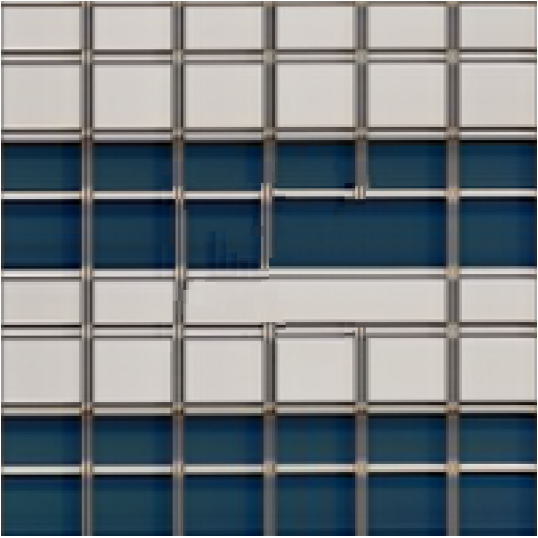}&
\includegraphics[width=.24\textwidth]{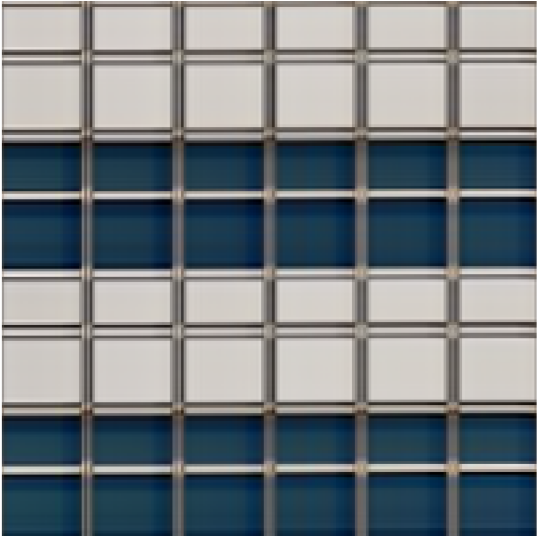}\\
(a) Observed image & (b) CTI &
(c) EXT & (d) Our result\\
& RE$=0.1222$& RE$=0.0658$ & RE$=3.89\times10^{-12}$
\end{tabular}
\vspace{-10pt}
\caption{Recovery of a facade image with tubal rank 3 via Algorithm ~\ref{alg:TIHT}.}\label{fig:exp2_nat}
\end{figure}

\begin{figure}[t]
\centering
\setlength{\tabcolsep}{4pt}
\begin{tabular}{cc}
\includegraphics[width=.45\textwidth]{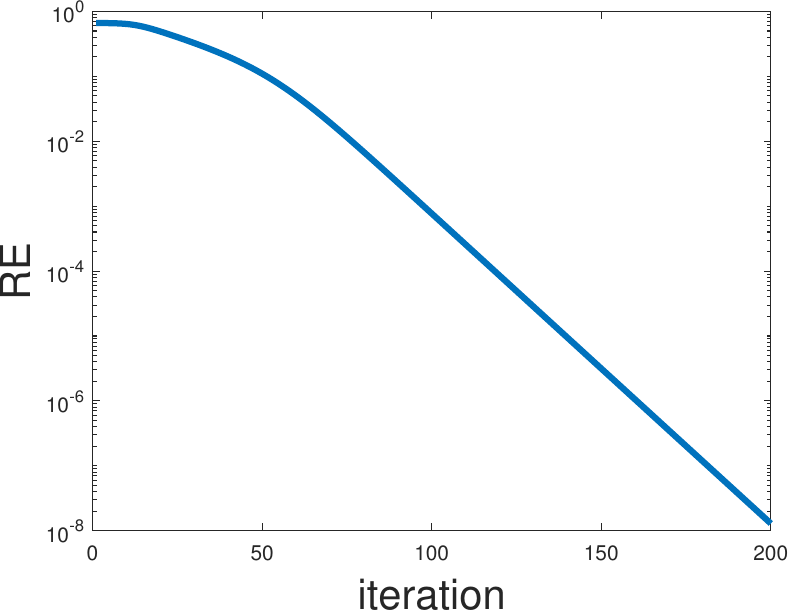}&
\includegraphics[width=.45\textwidth]{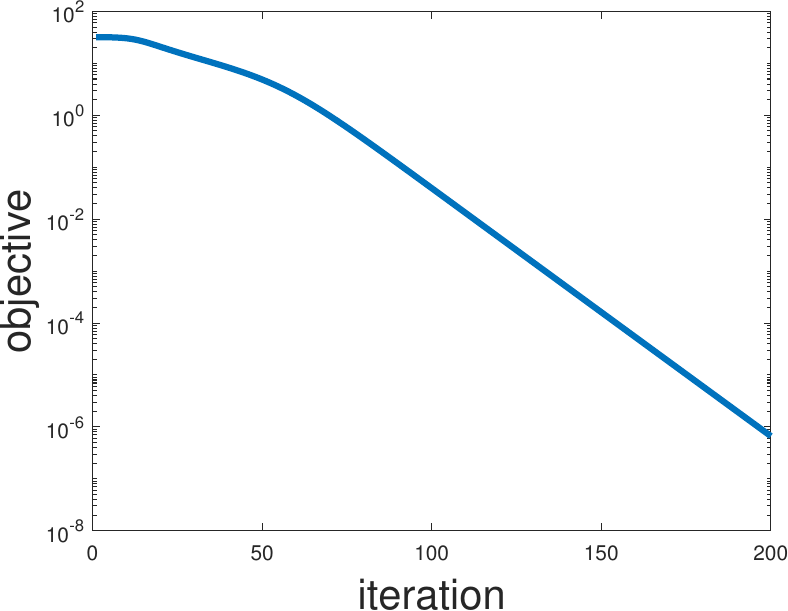}\\[-2pt]
\multicolumn{2}{c}{Convergence for the Checkerboard Image Test}\\[8pt]
\includegraphics[width=.45\textwidth]{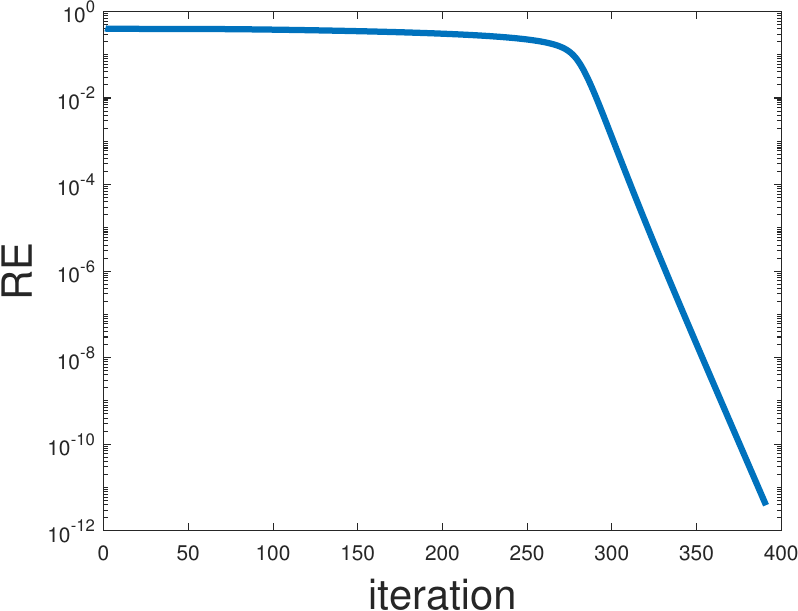}&
\includegraphics[width=.45\textwidth]{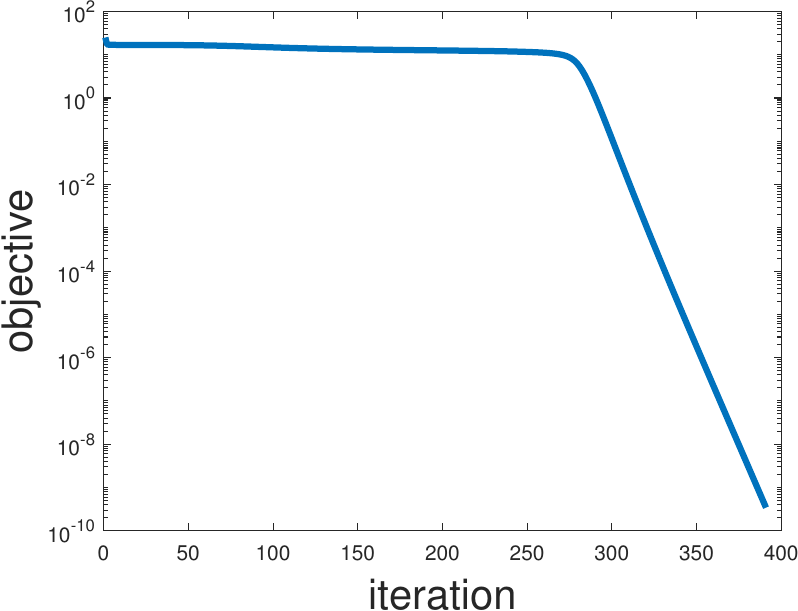}\\
 \multicolumn{2}{c}{Convergence for the Facade Image Test}
\end{tabular}
\vspace{-10pt}
\caption{Convergence of our method in color image inpainting.}\label{fig:exp2_con}
\end{figure}

\section{Conclusion}\label{sec:con}
The tensor has generalized the concept of the matrix but with more sophisticated features and computational challenges. Many tensor decompositions including CP, Tucker and t-SVD decompositions, have been developed to help analyze and manipulate large-scale data sets. Due to the computational efficiency and simple interpretation, t-SVD has recently attracted a lot of research attention, especially in the imaging field. In this work, we develop the iterative singular tube hard thresholding algorithm, which uses the t-SVD, together with its stochastic and batched stochastic versions. Tubal rank-restricted strong convexity and strong smoothness yield the convergence of the proposed algorithms.

\section*{CRediT Author Contributions}
Grotheer: validation, review \& editing, funding acquisition; Li: validation, review \& editing, revision, funding acquisition; Ma: investigation, formal analysis, review \& editing,  visualization, revision, funding acquisition; Needell: conceptualization, review \& editing, revision, project administration, supervision, funding acquisition; Qin: conceptualization, methodology, software, formal analysis, data curation, visualization, investigation, original draft writing, review \& editing, revision, funding acquisition.

\section*{Acknowledgments}
This material is based upon work supported by the National Security Agency under Grant No. H98230-19-1-0119, The Lyda Hill Foundation, The McGovern Foundation, and Microsoft Research, while the authors were in residence at the Mathematical Sciences Research Institute in Berkeley, California, during the summer of 2019. In addition, Grotheer was supported by the Goucher College Summer Research grant, Needell was funded by NSF CAREER DMS \#2011140 and NSF DMS \#2108479, and Qin is supported by NSF DMS \#1941197.

\bibliographystyle{unsrt}
\bibliography{ref}

\medskip
Received xxxx 20xx; revised xxxx 20xx; early access xxxx 20xx.
\medskip

\end{document}